\newcommand{\za}{\alpha}
\newcommand{\zb}{\beta}
\newcommand{\zd}{\delta}
\newcommand{\zg}{\gamma}
\newtheorem{theorem}{Theorem}[section]
\newtheorem{lemma}[theorem]{Lemma}
\newtheorem{prop}[theorem]{Proposition}
\newtheorem{cor}[theorem]{Corollary}
\theoremstyle{definition}
\newtheorem{mydef}[theorem]{Definition}
\newtheorem{example}[theorem]{Example}
\begin{document}

\thispagestyle{empty}

\title{BONGARTZ $\tau$-COMPLEMENTS OVER SPLIT-BY-NILPOTENT EXTENSIONS}
\author{Stephen Zito\thanks{The author was supported by the University of Connecticut-Waterbury}}
        
\maketitle

\begin{abstract}
Let $C$ be a finite dimensional algebra with $B$ a split extension by a nilpotent bimodule $E$, and let $M$ be a $\tau_C$-rigid module with $U$ its Bongartz $\tau$-complement.  If the induced module, $M\otimes_CB$, is $\tau_B$-rigid, we give a necessary and sufficient condition for $U\otimes_CB$ to be its Bongartz $\tau$-complement in $\mathop{\text{mod}}B$.  If $M$ is $\tau_B$-rigid, we again provide a necessary and sufficient condition for $U\otimes_CB$ to be its Bongartz $\tau$-complement in $\mathop{\text{mod}}B$ .
\end{abstract}

\section{Introduction}
Let $C$ be a finite dimensional algebra over an algebraically closed field $k$.  By module is meant throughout a finitely generated right $C$-module.  Following $\cite{AIR}$, we call a $C$-module $M$ $\tau_C$-$\emph{rigid}$ if $\text{Hom}_C(M,\tau_CM)=0$ and $\tau_C$-$\emph{tilting}$ if the number of pairwise nonisomorphic  indecomposable summands of $M$ equals the number of pairwise nonisomorphic simple modules of $C$.  We also say $M$ is $\emph{almost complete}$ $\tau_C$-$\emph{tilting}$ if $|M|=|C|-1$.  It was shown in \cite{AIR} that, given any $\tau_C$-rigid module, there exists a $\tau_C$-rigid module $U$ such that $M\oplus U$ is a $\tau_C$-tilting module.  This module $U$ is called the $\emph{Bongartz $\tau$-complement}$ of $M$.  In this paper, we are interested in the problem of extending Bongartz $\tau$-complements.  More precisely, let $C$ and $B$ be two finite dimensional $k$-algebras such that there exists a split surjective algebra morphism $B\rightarrow C$ whose kernel $E$ is contained in the radical of $B$.  We then say $B$ is a split extension of $A$ by the nilpotent bimodule $E$.
\par  
 The module categories of $C$ and $B$ are related by the functor $-\otimes_CB$.  Assuming $M\otimes_CB$ is a $\tau_B$-rigid module, we ask under what conditions will $U\otimes_CB$ be the Bongartz $\tau$-complement of $M\otimes_CB$ in $\mathop{\text{mod}}B$.  Our first main result is the following theorem.
 \begin{theorem}
 Let B be a split extension of C by a nilpotent bimodule E, and let M be a $\tau_C$-rigid module with $U$ its Bongartz $\tau$-complement.  Suppose $M\otimes_CB$ is $\tau_B$-rigid.  Then $U\otimes_CB$ is the Bongartz $\tau$-complement in $\mathop{\text{mod}}B$ if and only if $\emph{Hom}_C(U\otimes_CE,\tau_CM)=0$.
\end{theorem}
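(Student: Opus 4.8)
The plan is to reduce every statement about $\mathop{\text{mod}}B$ back to $\mathop{\text{mod}}C$ via the functor $-\otimes_C B$, using two standard features of a split\nobreakdash-by\nobreakdash-nilpotent extension: the adjunction isomorphism $\Hom_B(X\otimes_C B,Z)\cong\Hom_C(X,Z|_C)$ for $X\in\mathop{\text{mod}}C$, $Z\in\mathop{\text{mod}}B$; and, from the $C$\nobreakdash-bimodule decomposition $B\cong C\oplus E$ together with the fact that $-\otimes_C B$ sends minimal projective presentations to minimal projective presentations, the formula $\Hom_B(X\otimes_C B,\tau_B(Y\otimes_C B))\cong\Hom_C(X,\tau_C Y)\oplus\Hom_C(X\otimes_C E,\tau_C Y)$ for every $X\in\mathop{\text{mod}}C$ and every $\tau_C$\nobreakdash-rigid $Y$ (the case $X=Y=M$ of which, since its first summand vanishes, is the criterion that $M\otimes_C B$ is $\tau_B$\nobreakdash-rigid iff $\Hom_C(M\otimes_C E,\tau_C M)=0$). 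I also use that $C$ and $B$ have the same simple modules, so $|C|=|B|$, that $|X\otimes_C B|\ge|X|$ for every $C$\nobreakdash-module $X$ (via the algebra surjection $\End_B(X\otimes_C B)\twoheadrightarrow\End_C(X)$ obtained by reducing modulo the $B$\nobreakdash-submodule $(X\otimes_C B)E=X\otimes_C E$), the identification ${}^{\perp_B}(\tau_B(M\otimes_C B))=\{\,Z\in\mathop{\text{mod}}B : Z|_C\in{}^{\perp_C}(\tau_C M)\,\}$ valid under our hypothesis, and the characterisation of Bongartz complements: a $\tau_B$\nobreakdash-tilting module having $M\otimes_C B$ as a summand is \emph{the} Bongartz complement of $M\otimes_C B$ precisely when its $\Gen$ equals ${}^{\perp_B}(\tau_B(M\otimes_C B))$.

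For the ``only if'' direction this is immediate: if $U\otimes_C B$ is the Bongartz $\tau$\nobreakdash-complement, then $(M\oplus U)\otimes_C B$ is $\tau_B$\nobreakdash-tilting, hence $\tau_B$\nobreakdash-rigid, so $\Hom_B(U\otimes_C B,\tau_B(M\otimes_C B))=0$; applying the $\Hom$\nobreakdash-formula with $X=U$, $Y=M$ and using $\Hom_C(U,\tau_C M)=0$ (because $M\oplus U$ is $\tau_C$\nobreakdash-rigid) forces $\Hom_C(U\otimes_C E,\tau_C M)=0$.

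For the ``if'' direction, assume $\Hom_C(U\otimes_C E,\tau_C M)=0$. The key observation is that this, together with $\Hom_C(M\otimes_C E,\tau_C M)=0$ coming from the hypothesis on $M\otimes_C B$, says exactly that $(M\oplus U)\otimes_C E\in{}^{\perp_C}(\tau_C M)=\Gen_C(M\oplus U)$, the last equality being the defining property of the $C$\nobreakdash-Bongartz complement $U$; hence $\big((M\oplus U)\otimes_C B\big)|_C=(M\oplus U)\oplus\big((M\oplus U)\otimes_C E\big)$ lies in $\Gen_C(M\oplus U)$. From this I conclude, first, that $(M\oplus U)\otimes_C B$ is $\tau_B$\nobreakdash-rigid: in $\Hom_B(X\otimes_C B,\tau_B(Y\otimes_C B))\cong\Hom_C(X,\tau_C Y)\oplus\Hom_C(X\otimes_C E,\tau_C Y)$ with $X,Y\in\{M,U\}$ the first summand vanishes since $M\oplus U$ is $\tau_C$\nobreakdash-rigid, and the second vanishes because $X\otimes_C E$ is a direct summand of $(M\oplus U)\otimes_C E\in\Gen_C(M\oplus U)$, while any module in $\Gen_C(M\oplus U)$, being a quotient of a power of the $\tau_C$\nobreakdash-rigid module $M\oplus U$, admits no nonzero map to $\tau_C(M\oplus U)$. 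Since $(M\oplus U)\otimes_C B$ is $\tau_B$\nobreakdash-rigid with at least $|M\oplus U|=|C|=|B|$ non\nobreakdash-isomorphic indecomposable summands, it is $\tau_B$\nobreakdash-tilting. Second, $\Gen_B\big((M\oplus U)\otimes_C B\big)=\{\,Z : Z|_C\in\Gen_C(M\oplus U)\,\}$: the inclusion $\subseteq$ follows because $\big((M\oplus U)\otimes_C B\big)|_C\in\Gen_C(M\oplus U)$ and $\Gen_C(M\oplus U)$ is closed under quotients and sums, and $\supseteq$ follows by lifting a surjection $(M\oplus U)^{\,n}\to Z|_C$ through the adjunction to a surjection $\big((M\oplus U)\otimes_C B\big)^{\,n}\to Z$. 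By the Bongartz property over $C$ this set equals $\{\,Z : Z|_C\in{}^{\perp_C}(\tau_C M)\,\}={}^{\perp_B}(\tau_B(M\otimes_C B))$. Thus $(M\oplus U)\otimes_C B$ is $\tau_B$\nobreakdash-tilting, contains $M\otimes_C B$ as a summand, and generates exactly ${}^{\perp_B}(\tau_B(M\otimes_C B))$, so it is the Bongartz complement; that is, $U\otimes_C B$ is the Bongartz $\tau$\nobreakdash-complement of $M\otimes_C B$.

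The main obstacle — and the place where the split\nobreakdash-by\nobreakdash-nilpotent structure is genuinely used rather than formal nonsense — is supplying the two inputs cited in the first paragraph: the $\Hom$\nobreakdash-space formula $\Hom_B(X\otimes_C B,\tau_B(Y\otimes_C B))\cong\Hom_C(X,\tau_C Y)\oplus\Hom_C(X\otimes_C E,\tau_C Y)$ for $\tau_C$\nobreakdash-rigid $Y$ (equivalently, a description of $\tau_B(Y\otimes_C B)|_C$, obtained by comparing $(-)^{*}$ applied to the presentations $P_1\to P_0\to Y\to0$ and $P_1\otimes_C B\to P_0\otimes_C B\to Y\otimes_C B\to0$ through $B\cong C\oplus E$), and the identification ${}^{\perp_B}(\tau_B(M\otimes_C B))=\{Z:Z|_C\in{}^{\perp_C}(\tau_C M)\}$. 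One must also be slightly careful that these are formulated with honest, not stable, $\Hom$ groups, since $-\otimes_C B$ is not exact and so $\Ext^1_B(X\otimes_C B,Z)$ need not coincide with $\Ext^1_C(X,Z|_C)$; everything else is bookkeeping with torsion classes. If one prefers to avoid the ${}^{\perp}$\nobreakdash-identification as a black box, note that $\Gen_B\big((M\oplus U)\otimes_C B\big)\subseteq{}^{\perp_B}(\tau_B(M\otimes_C B))$ already follows from $\tau_B$\nobreakdash-rigidity, and the reverse inclusion is exactly the statement that $\Hom_B(Z,\tau_B(M\otimes_C B))=0$ implies $Z|_C\in{}^{\perp_C}(\tau_C M)$, which is where a direct analysis of $\tau_B(M\otimes_C B)$ re\nobreakdash-enters.
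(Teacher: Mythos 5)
Your proposal is correct, and its ``only if'' direction is the same as the paper's: both reduce $\Hom_B(U\otimes_CB,\tau_B(M\otimes_CB))$ to $\Hom_C(U,\tau_CM)\oplus\Hom_C(U\otimes_CE,\tau_CM)$ via the Assem--Marmaridis description of $\tau_B(-\otimes_CB)$, the adjunction, and $B_C\cong C\oplus E$. In the converse direction your assembly is genuinely different. The paper first shows $U\otimes_CB$ is $\tau_B$-rigid by checking $\Hom_C(U\otimes_CE,\tau_CU)=0$ (using $\tau_CU\in\mathrm{Cogen}(\tau_CM)$), then verifies Ext-projectivity of $U\otimes_CB$ in $^{\perp}(\tau_B(M\otimes_CB))$ through the sandwich $\mathrm{Gen}(U\otimes_CB)\subseteq~\!^{\perp}(\tau_B(M\otimes_CB))\subseteq~\!^{\perp}(\tau_B(U\otimes_CB))$, and finally rules out extra indecomposable Ext-projectives by counting summands against $|B|=|C|$. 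You instead observe that the two hypotheses say exactly that $(M\oplus U)\otimes_CE\in~\!^{\perp}(\tau_CM)=\mathrm{Gen}(M\oplus U)$, deduce that all four $\Hom$-obstructions vanish simultaneously so that $(M\oplus U)\otimes_CB$ is $\tau_B$-rigid, promote it to $\tau_B$-tilting by the same summand count, and then identify $\mathrm{Gen}_B\bigl((M\oplus U)\otimes_CB\bigr)$ with $^{\perp}(\tau_B(M\otimes_CB))$, concluding by the uniqueness of the $\tau$-tilting module generating a given functorially finite torsion class (a fact not stated in the paper but derivable from its Ext-projectivity criterion together with the bound $|T|\le |B|$ for basic $\tau_B$-rigid $T$). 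The two routes consume the same inputs --- they are two faces of the torsion pair $(^{\perp}(\tau_CM),\mathrm{Cogen}(\tau_CM))$ --- but yours trades the pointwise Ext-projectivity check for the torsion-class bijection, and it has the merit of justifying explicitly (via the surjection $\End_B(X\otimes_CB)\twoheadrightarrow\End_C(X)$ with nilpotent kernel) that induction preserves the number of pairwise nonisomorphic indecomposable summands, a point the paper's final counting step uses silently.
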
 
Our second main result concerns $M$ as a $\tau_B$-rigid module and its Bongartz $\tau$-complement in $\mathop{\text{mod}}B$.  Here, $(\tau_BM)_C$ denotes the $C$-module structure of $\tau_BM$.
\begin{theorem}
 Let B be a split extension of C by a nilpotent bimodule E, and let M be a $\tau_C$-rigid module with $U$ its Bongartz $\tau$-complement.  Suppose $M$ is $\tau_B$-rigid.  Then $U\otimes_CB$ is the Bongartz $\tau$-complement in $\mathop{\text{mod}}B$ if and only if $\emph{Hom}_C(U,(\tau_BM)_C)=0$.
\end{theorem}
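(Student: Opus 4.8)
The key structural input is the adjunction $(-\otimes_CB,\ \mathrm{res})$, where $\mathrm{res}\colon\mathrm{mod}\,B\to\mathrm{mod}\,C$ denotes restriction of scalars along the section $C\hookrightarrow B$; thus $\mathrm{res}(N)=N_C$ for a $B$-module $N$, and in particular $\mathrm{res}(\tau_BM)=(\tau_BM)_C$. The adjunction yields a natural isomorphism
\[
\mathrm{Hom}_B\bigl(U\otimes_CB,\ \tau_BM\bigr)\ \cong\ \mathrm{Hom}_C\bigl(U,\ (\tau_BM)_C\bigr),
\]
so the hypothesis $\mathrm{Hom}_C(U,(\tau_BM)_C)=0$ is literally the statement $\mathrm{Hom}_B(U\otimes_CB,\tau_BM)=0$, i.e.\ one of the two crossed Hom-vanishings required for $M\oplus(U\otimes_CB)$ to be $\tau_B$-rigid. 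The forward implication is then immediate: if $U\otimes_CB$ is the Bongartz $\tau$-complement of $M$ in $\mathrm{mod}\,B$, then $M\oplus(U\otimes_CB)$ is $\tau_B$-tilting, hence $\tau_B$-rigid, hence $\mathrm{Hom}_B(U\otimes_CB,\tau_BM)=0$, and the displayed isomorphism gives $\mathrm{Hom}_C(U,(\tau_BM)_C)=0$.

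For the converse, assume $\mathrm{Hom}_C(U,(\tau_BM)_C)=0$, i.e.\ $\mathrm{Hom}_B(U\otimes_CB,\tau_BM)=0$. The first task is to show $M\oplus(U\otimes_CB)$ is $\tau_B$-tilting. Since $M\oplus U$ is $\tau_C$-tilting we have $\mathrm{Hom}_C(U,\tau_CU)=0$ and $\mathrm{Hom}_C(M,\tau_CU)=0$; invoking the compatibility of the transpose with $-\otimes_CB$ over a split-by-nilpotent extension (the comparison between $\tau_B(X\otimes_CB)$ and $\tau_CX$ for $X\in\mathrm{mod}\,C$) transports these vanishings to $\mathrm{Hom}_B(U\otimes_CB,\tau_B(U\otimes_CB))=0$ and $\mathrm{Hom}_B(M,\tau_B(U\otimes_CB))=0$; together with the given $\tau_B$-rigidity of $M$ and the hypothesis, $M\oplus(U\otimes_CB)$ is $\tau_B$-rigid. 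That it has the correct number $|B|=|C|$ of pairwise nonisomorphic indecomposable summands follows from $|M\oplus U|=|C|$, since inducing up preserves $g$-vectors (a minimal projective presentation of $X$ over $C$ induces a minimal one of $X\otimes_CB$ over $B$, and $-\otimes_CB$ is the identity on $K_0(\mathrm{proj})$) and preserves tops (as $E\subseteq\mathrm{rad}\,B$), so no indecomposable summand of $U$ is lost, merged, or absorbed into $M$. Hence $M\oplus(U\otimes_CB)$ is $\tau_B$-tilting.

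It remains to identify this module as the \emph{Bongartz} completion, i.e.\ to prove $\mathrm{Gen}_B(M\oplus U\otimes_CB)={}^{\perp}(\tau_BM)$, where ${}^{\perp}(\tau_BM)=\{Y\in\mathrm{mod}\,B:\mathrm{Hom}_B(Y,\tau_BM)=0\}$; the inclusion $\subseteq$ holds automatically for any $\tau_B$-rigid module containing $M$, so the content is $\supseteq$. Since $U$ is the Bongartz complement over $C$, $\mathrm{Gen}_C(M\oplus U)={}^{\perp}(\tau_CM)$ is the largest torsion class of $\mathrm{mod}\,C$ in which $M$ is Ext-projective. I would combine this with the identity $\mathrm{res}^{-1}(\mathcal{T})=\mathrm{Gen}_B(\{X\otimes_CB:X\in\mathcal{T}\})$, valid for any torsion class $\mathcal{T}\subseteq\mathrm{mod}\,C$ (restriction is exact, so $\mathrm{res}^{-1}(\mathcal{T})$ is a torsion class, and the counit $Y_C\otimes_CB\twoheadrightarrow Y$ exhibits any $Y$ with $Y_C\in\mathcal{T}$ as a quotient of an object of $\{X\otimes_CB\}$); applied to $\mathcal{T}={}^{\perp}(\tau_CM)$ this gives $\mathrm{res}^{-1}({}^{\perp}(\tau_CM))=\mathrm{Gen}_B(M\otimes_CB\oplus U\otimes_CB)$. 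The remaining points are: (a) ${}^{\perp}(\tau_BM)\subseteq\mathrm{res}^{-1}({}^{\perp}(\tau_CM))$, i.e.\ $\mathrm{Hom}_B(Y,\tau_BM)=0$ forces $\mathrm{Hom}_C(Y_C,\tau_CM)=0$, which should follow from the comparison between $(\tau_BM)_C$ and $\tau_CM$; and (b) that under the hypothesis the module $M\oplus(U\otimes_CB)$ already generates $M\otimes_CB$ — equivalently $M\otimes_CE\in\mathrm{Gen}_B(M\oplus U\otimes_CB)$ via the exact sequence $0\to M\otimes_CE\to M\otimes_CB\to M\to0$ — so that $\mathrm{Gen}_B(M\otimes_CB\oplus U\otimes_CB)=\mathrm{Gen}_B(M\oplus U\otimes_CB)$. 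Then (a) and (b) give $\supseteq$ and finish the proof.

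I expect step (b) to be the main obstacle: it is precisely the discrepancy between completing $M$ and completing $M\otimes_CB$ that distinguishes this theorem from the previous one, and one must show that the Hom-condition is exactly what forces $M\otimes_CE$, hence $M\otimes_CB$, into the torsion class generated by $M\oplus U\otimes_CB$; absent this, one only knows that $M\oplus(U\otimes_CB)$ sits inside the torsion class induced from the Bongartz completion over $C$. A careful analysis of the sequence $0\to M\otimes_CE\to M\otimes_CB\to M\to0$ against $\tau_BM$, guided by the role played by $\mathrm{Hom}_C(U\otimes_CE,\tau_CM)=0$ in the first theorem, should settle it.
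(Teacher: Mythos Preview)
The forward direction matches the paper. The converse has two problems.

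\textbf{Gap in the $\tau_B$-rigidity of $U\otimes_CB$.} Your claim that $\Hom_C(U,\tau_CU)=0$ ``transports'' to $\Hom_B(U\otimes_CB,\tau_B(U\otimes_CB))=0$ via the formula $\tau_B(X\otimes_CB)\cong\Hom_C(B,\tau_CX)$ is incomplete: the adjunction gives
\[
\Hom_B(U\otimes_CB,\tau_B(U\otimes_CB))\cong\Hom_C\bigl((U\otimes_CB)_C,\tau_CU\bigr)\cong\Hom_C(U,\tau_CU)\oplus\Hom_C(U\otimes_CE,\tau_CU),
\]
and the second summand is not killed by $\Hom_C(U,\tau_CU)=0$ alone. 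What you are missing is the input $\tau_B(M\otimes_CB)\hookrightarrow\tau_BM$ (Assem--Zacharia): the hypothesis then forces $\Hom_B(U\otimes_CB,\tau_B(M\otimes_CB))=0$, hence $\Hom_C(U\otimes_CE,\tau_CM)=0$ by adjunction, and finally $\Hom_C(U\otimes_CE,\tau_CU)=0$ because $\tau_CU\in\mathrm{Cogen}(\tau_CM)$. The paper does not verify $\tau_B$-rigidity separately at all; it gets it for free from the torsion-class containment below.

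\textbf{Step (b) is an unnecessary detour and is where you get stuck.} The paper does not go through $\mathrm{Gen}$; it proves directly that ${}^{\perp}(\tau_BM)\subseteq{}^{\perp}(\tau_B(U\otimes_CB))$. Take $X\in{}^{\perp}(\tau_BM)$; then $X\in{}^{\perp}(\tau_B(M\otimes_CB))$ by the submodule relation above, so $\Hom_C(X_C,\tau_CM)=0$ by adjunction, so $\Hom_C(X_C,\tau_CU)=0$ since $\tau_CU\in\mathrm{Cogen}(\tau_CM)$, so $X\in{}^{\perp}(\tau_B(U\otimes_CB))$ by adjunction again. Together with $\Gen(U\otimes_CB)\subseteq{}^{\perp}(\tau_BM)$ this makes $U\otimes_CB$ Ext-projective in ${}^{\perp}(\tau_BM)$, and the count $|B|=|C|$ finishes. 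Notice this argument is essentially your step (a) followed by $\tau_CU\in\mathrm{Cogen}(\tau_CM)$, with no need for (b). In fact (b), which unwinds to $\Hom_C(M,(\tau_BM)_C)=0$, is \emph{not} part of the hypothesis and there is no evident direct proof of it from $\Hom_C(U,(\tau_BM)_C)=0$; you would be trying to establish an additional Ext-projectivity statement (that of Proposition~3.2 in the paper) that the theorem neither assumes nor needs.
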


We use freely and without further reference properties of the module categories and Auslander-Reiten sequences as can be found in \cite{ASS}.  For an algebra $C$, we denote by $\tau_C$ the Auslander-Reiten translation in $\mathop{\text{mod}}C$.
 
 \subsection{Split extensions and extensions of scalars}
We begin this section with the formal definition of a split extension.
\begin{mydef}
 Let $B$ and $C$ be two algebras.  We say $B$ is a $\emph{split extension}$ of $C$ by a nilpotent bimodule $E$ if there exists a short exact sequence of $B$-modules
 \[
 0\rightarrow E\rightarrow B\mathop{\rightleftarrows}^{\mathrm{\pi}}_{\mathrm{\sigma}} C\rightarrow 0
\]
where $\pi$ and $\sigma$ are algebra morphisms, such that $\pi\circ\sigma=1_C$, and $E=\ker\pi$ is nilpotent.  
\end{mydef}
 A useful way to study the module categories of $C$ and $B$ is a general construction via the tensor product, also know as $\emph{extension of scalars}$, that sends a $C$-module to a particular $B$-module.
 \begin{mydef}
 Let $C$ be a subalgebra of $B$ such that $1_C=1_B$, then
 \[
 -\otimes_CB:\mathop{\text{mod}}C\rightarrow\mathop{\text{mod}}B
 \]
 is called the $\emph{induction functor}$, and dually
 \[
 D(B\otimes_CD-):\mathop{\text{mod}}C\rightarrow\mathop{\text{mod}}B
 \]
 is called the $\emph{coinduction functor}$.  Moreover, given $M\in\mathop{\text{mod}}C$, the corresponding induced module is defined to be $M\otimes_CB$, and the coinduced module is defined to be $D(B\otimes_CDM)$.  
 \end{mydef}
 The next proposition shows a precise relationship between a given $C$-module and its image under the induction and coinduction functors.
 \begin{prop}$\emph{\cite[Proposition~3.6]{SS}}$.  
\label{SS, Proposition 3.6}
Suppose B is a split extension of C by a nilpotent bimodule E.  Then, for every $M\in\mathop{\emph{mod}}C$, there exists two short exact sequences of B-modules:
\begin{enumerate}
\item[$\emph{(a)}$] $0\rightarrow M\otimes_CE\rightarrow M\otimes_CB\rightarrow M\rightarrow 0$
\item[$\emph{(b)}$] $0\rightarrow M\rightarrow D(B\otimes_CDM)\rightarrow D(E\otimes_CDM)\rightarrow 0$
\end{enumerate}
\end{prop}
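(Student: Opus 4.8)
The plan is to produce both sequences by applying the (co)induction construction not to $M$ directly but to the defining short exact sequence $0\to E\to B\to C\to 0$, using the splitting $\sigma$ only to keep the relevant \emph{one-sided} sequences exact under $\otimes_C$.

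First I would pin down the bimodule structures on the defining sequence. Via $\sigma$ we regard $C$ as a subalgebra of $B$ with $1_C=1_B$, so $B$ is naturally a $C$-$B$-bimodule and a $B$-$C$-bimodule. Equip $C$ with the right $B$-action inflated along $\pi$, namely $c\cdot b=c\,\pi(b)$, and with its usual left $C$-action; a short check shows that $\pi\colon B\to C$ is then a homomorphism of $C$-$B$-bimodules, so $E=\ker\pi$ is a sub-$C$-$B$-bimodule and
\[
0\to E\to B\xrightarrow{\ \pi\ }C\to 0
\]
is exact as a sequence of $C$-$B$-bimodules. Moreover $\sigma$ is a morphism of left $C$-modules with $\pi\sigma=1_C$, so this sequence is split exact over $C$ on the left. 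By the symmetric argument (inflating the left $B$-action on $C$ along $\pi$) the same sequence is exact as $B$-$C$-bimodules and split exact over $C$ on the right.

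For part (a), apply $M\otimes_C-$ to the $C$-$B$-bimodule sequence above. An additive functor carries a split short exact sequence to a split short exact sequence, so
\[
0\to M\otimes_C E\to M\otimes_C B\xrightarrow{\,1_M\otimes\pi\,}M\otimes_C C\to 0
\]
is an exact sequence of right $B$-modules; composing with the canonical isomorphism $M\otimes_C C\cong M$, where $M$ is viewed in $\mathop{\text{mod}}B$ via $\pi$, gives exactly the sequence in (a). For part (b), apply $-\otimes_C DM$ to the $B$-$C$-bimodule version of the sequence; since it is split over $C$ on the right we obtain a short exact sequence of left $B$-modules
\[
0\to E\otimes_C DM\to B\otimes_C DM\to C\otimes_C DM\to 0,\qquad C\otimes_C DM\cong DM .
\]
Applying the exact contravariant duality $D=\Hom_k(-,k)$, which carries left $B$-modules to right $B$-modules and reverses arrows, and using $D(DM)\cong M$, yields precisely the sequence in (b).

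The only points that require genuine care are the verifications that $\pi$ and $\sigma$ are bimodule homomorphisms for the stated actions, so that $E$ acquires the correct bimodule structure, and the observation that the splitting supplied by $\sigma$ is one-sided — this is exactly what lets us tensor with $M$ (resp.\ $DM$) without losing exactness. Finite-dimensionality keeps every module in $\mathop{\text{mod}}B$ and makes $D^2\cong\mathrm{id}$ available. I do not anticipate a real obstacle: once the bimodule sequence and its two one-sided splittings are in place, the statement is a bookkeeping exercise.
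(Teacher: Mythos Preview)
The paper does not actually prove this proposition; it is quoted without proof from \cite[Proposition~3.6]{SS}, so there is no argument in the paper to compare yours against. Your proof is correct and is essentially the standard one: tensor the split-over-$C$ bimodule sequence $0\to E\to B\to C\to 0$ with $M$ on the appropriate side, using the one-sided splitting furnished by $\sigma$ to preserve exactness, and dualize for part~(b). The checks you flag (that $\pi$ is a $C$-$B$-bimodule map for the inflated action, and that $\sigma$ splits the sequence only as a one-sided $C$-module map) are exactly the right ones, and they go through without difficulty.
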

 Thus, as a $C$-module, $M\otimes_CB\cong M\oplus (M\otimes_CE)$.  Next, we state a result which shows the number of indecomposable projective and injective modules remains the same under a split extension.  Here, $D$ denotes the standard duality functor. 
 \begin{prop}$\emph{\cite[Proposition~3.4]{SS}}$.
 \label{SS, Proposition 3.4}
 Let $C$ be a subalgebra of $B$ such that $1_C=1_B$.  If e is an idempotent then
 \begin{enumerate}
 \item[$\emph{(a)}$] $(eC)\otimes_CB\cong eB$.
 \item[$\emph{(b)}$] $D(B\otimes_C Ce)\cong DBe$.
 \end{enumerate}
 \end{prop}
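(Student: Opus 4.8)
The plan is to handle (a) directly from the multiplication map and then deduce (b) by the mirror argument followed by an application of $D$.

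For (a), consider the canonical multiplication map $\mu\colon C\otimes_C B\to B$, $c\otimes b\mapsto cb$. It is an isomorphism of $(C,B)$-bimodules: its inverse sends $b$ to $1_C\otimes b=1_B\otimes b$, and the hypothesis $1_C=1_B$ is precisely what makes this inverse well defined on both sides. Since $C_C=eC\oplus(1-e)C$ as right $C$-modules and $-\otimes_C B$ is additive, applying the induction functor yields $C\otimes_C B\cong(eC\otimes_C B)\oplus((1-e)C\otimes_C B)$, and under $\mu$ the summand $eC\otimes_C B$ is carried onto $eCB$. Because $1_B\in C$ we have $CB=B$, so $eCB=eB$, which is a direct summand of $B_B$. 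The last point is to note that the restricted map $eC\otimes_C B\to eB$ is a homomorphism of right $B$-modules — immediate, as $\mu$ is — giving $(eC)\otimes_C B\cong eB$.

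For (b), I would first prove the left-handed analogue $B\otimes_C Ce\cong Be$. This is the mirror image of (a): one uses the multiplication map $\mu'\colon B\otimes_C C\to B$, $b\otimes c\mapsto bc$, a $(B,C)$-bimodule isomorphism (inverse $b\mapsto b\otimes 1_C$), together with the decomposition ${}_CC=Ce\oplus C(1-e)$ of left $C$-modules and the identity $BC=B$, to identify $B\otimes_C Ce$ with $BCe=Be$ as left $B$-modules. (Equivalently, apply (a) verbatim to the inclusion $C^{\mathrm{op}}\subseteq B^{\mathrm{op}}$.) Applying the duality $D$ to $B\otimes_C Ce\cong Be$ then gives $D(B\otimes_C Ce)\cong D(Be)$, which is the assertion, $DBe$ being the injective summand $D(Be)$ of $DB$ determined by $e$.

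There is no real analytic difficulty here; the statement is essentially bookkeeping. The two things to be careful about are (i) tracking at each step which module structure is in force — left or right, over $C$ or over $B$ — so that every map written down is a module homomorphism and not merely $k$-linear, and (ii) the small observation, invoked twice, that $1_C=1_B$ forces $CB=B=BC$, which is what collapses $eCB$ and $BCe$ to $eB$ and $Be$. I expect (ii) to be the one hypothesis actually pulling weight; everything else is additivity of the tensor functors and the compatibility of $D$ with finite direct sums.
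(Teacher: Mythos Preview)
Your argument is correct. The paper does not give its own proof of this proposition; it is quoted from \cite{SS} as a background fact, so there is no proof in the paper to compare against. Your derivation via the multiplication isomorphism $C\otimes_C B\cong B$ together with the idempotent splitting $C=eC\oplus(1-e)C$ is the standard one, and your observation that $1_C=1_B$ is exactly what forces $CB=B=BC$ (so that $eCB=eB$ and $BCe=Be$) correctly pinpoints where the hypothesis enters.
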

 In particular, the number of simple modules remains unchanged under a split extension.
 Next, we state a description of the Auslander-Reiten translation of an induced module.
 \begin{lemma}$\emph{\cite[Lemma~2.1]{AM}}$
 \label{AM1}
 For a $C$-module M, we have
 \[
 \tau_B(M\otimes_CB)\cong\emph{Hom}_C(_BB_C,\tau_CM)
 \]
 \end{lemma}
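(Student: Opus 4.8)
The plan is to realize both sides as the kernel of the same map between injective $B$-modules, by applying two a priori different constructions to a minimal projective presentation of $M$ and then matching them term by term. First I would fix a minimal projective presentation $P_1 \xrightarrow{p} P_0 \to M \to 0$ in $\mathop{\text{mod}}C$. Writing $\nu_C = D\Hom_C(-,C)$ for the Nakayama functor, the definition of the transpose together with the exactness of $D$ gives an exact sequence $0 \to \tau_C M \to \nu_C P_1 \xrightarrow{\nu_C p} \nu_C P_0$, so that $\tau_C M = \ker(\nu_C p)$. Applying the left exact functor $\Hom_C({}_BB_C,-)$ then yields
\[
0 \to \Hom_C({}_BB_C, \tau_C M) \to \Hom_C({}_BB_C, \nu_C P_1) \to \Hom_C({}_BB_C, \nu_C P_0),
\]
so that $\Hom_C({}_BB_C, \tau_C M)$ is identified as a kernel.

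Second, I would apply the induction functor to the presentation of $M$. By Proposition \ref{SS, Proposition 3.4}(a) each $P_i\otimes_C B$ is projective, so $P_1\otimes_C B \xrightarrow{p\otimes 1} P_0\otimes_C B \to M\otimes_C B \to 0$ is a projective presentation of $M\otimes_C B$. The crucial point is that this presentation is again \emph{minimal}: since $E\subseteq\mathrm{rad}\,B$ we have $B/\mathrm{rad}\,B\cong C/\mathrm{rad}\,C$, whence $\mathrm{top}_B(X\otimes_C B)\cong\mathrm{top}_C(X)$ naturally, and induction therefore carries projective covers to projective covers. Granting minimality, the same formula as before gives $\tau_B(M\otimes_C B)=\ker(\nu_B(p\otimes 1))$, where $\nu_B=D\Hom_B(-,B)$.

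The heart of the argument is to identify the two kernels. For a projective $P=eC$ I would establish a natural isomorphism
\[
\Hom_C({}_BB_C, \nu_C P) \cong \nu_B(P\otimes_C B).
\]
Indeed, $\nu_C(eC)\cong DCe$, and the tensor--hom adjunction gives $\Hom_C(B_C, DCe)\cong D(B\otimes_C Ce)$, which by Proposition \ref{SS, Proposition 3.4}(b) equals $DBe\cong\nu_B(eB)=\nu_B((eC)\otimes_C B)$. These isomorphisms are natural in $P$, so they intertwine the map $\Hom_C({}_BB_C,\nu_C p)$ with the map $\nu_B(p\otimes 1)$. Comparing kernels across this identification then yields $\Hom_C({}_BB_C,\tau_C M)\cong\tau_B(M\otimes_C B)$, which is the claim.

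The step I expect to be the main obstacle is the preservation of minimality of the projective presentation under induction; the remaining steps are formal manipulations with adjunctions and with the duality $D$. The minimality claim is precisely where the hypothesis $E\subseteq\mathrm{rad}\,B$ is used, rather than an arbitrary surjection, via the identification of radicals and tops indicated above. I would therefore isolate it as a preliminary observation that the induction functor is compatible with projective covers, after which the term-by-term comparison goes through cleanly.
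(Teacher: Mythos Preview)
Your argument is correct. Note, however, that the paper does not prove this lemma at all: it is quoted verbatim from Assem--Marmaridis \cite[Lemma~2.1]{AM}, so there is no in-paper proof to compare against. Your outline is essentially the proof given in that reference: pass a minimal projective presentation of $M$ through the induction functor, check that minimality survives because $E\subseteq\mathrm{rad}\,B$ forces $B/\mathrm{rad}\,B\cong C/\mathrm{rad}\,C$, and then identify $\nu_B(P\otimes_C B)$ with $\Hom_C({}_BB_C,\nu_C P)$ term by term on projectives via the adjunction $\Hom_C(B_C,D(Ce))\cong D(B\otimes_C Ce)$ together with Proposition~\ref{SS, Proposition 3.4}.

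Your instinct that preservation of minimality is the only non-formal step is exactly right. One small refinement worth making explicit when you write it up: it is not enough to say that induction preserves projective covers of $M$; you also need that $P_1\otimes_C B$ is a projective cover of the \emph{new} syzygy $\Omega_B(M\otimes_C B)$, which is generally not $\Omega_C(M)\otimes_C B$. The clean way to handle both conditions at once is to observe that a presentation $P_1\xrightarrow{p}P_0$ is minimal iff, in any decomposition into indecomposable projectives, every matrix entry of $p$ lies in the radical; since $\sigma(\mathrm{rad}\,C)\subseteq\mathrm{rad}\,B$ and induction sends $e_iC\mapsto e_iB$ with $e_i$ still primitive, radical entries stay radical. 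With that said, the comparison of kernels goes through exactly as you describe.
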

This lemma is important because it allows us to use an adjunction isomorphism.  
 \begin{lemma}
\label{Adjunct}
 Let $M$ be a $C$-module, $M\otimes_CB$ the induced module, and let $X$ be any $B$-module.  Then we have 
\[
\emph{Hom}_B(X,\tau_B(M\otimes_CB))\cong\emph{Hom}_B(X,\emph{Hom}_C(_BB_C,\tau_CM)\cong\emph{Hom}_C(X\otimes_BB_C,\tau_CM)
\]
and
\[
\emph{Hom}_B(M\otimes_CB,X)\cong\emph{Hom}_C(M,\emph{Hom}_B(_CB_B,X)).
\]
\end{lemma}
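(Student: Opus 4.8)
The plan is to deduce both displayed isomorphisms from Lemma~\ref{AM1} together with the classical tensor--hom adjunction $\Hom_S(N\otimes_R L,-)\cong\Hom_R(N,\Hom_S(L,-))$ attached to an $(R,S)$-bimodule $L$. Nothing beyond this formal machinery is needed; the only care required is in identifying the various left and right $C$- and $B$-structures on $B$.

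For the first chain, I would begin by applying the functor $\Hom_B(X,-)$ to the natural isomorphism $\tau_B(M\otimes_CB)\cong\Hom_C({}_BB_C,\tau_CM)$ provided by Lemma~\ref{AM1}; this gives the first isomorphism $\Hom_B(X,\tau_B(M\otimes_CB))\cong\Hom_B(X,\Hom_C({}_BB_C,\tau_CM))$ with no further work. For the second isomorphism, I would invoke the adjunction associated to the $(B,C)$-bimodule ${}_BB_C$, namely that $-\otimes_B{}_BB_C\colon\mathop{\text{mod}}B\to\mathop{\text{mod}}C$ is left adjoint to $\Hom_C({}_BB_C,-)\colon\mathop{\text{mod}}C\to\mathop{\text{mod}}B$, yielding $\Hom_B(X,\Hom_C({}_BB_C,\tau_CM))\cong\Hom_C(X\otimes_BB_C,\tau_CM)$. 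Here one checks that the right $C$-structure on ${}_BB_C$ is the one inherited from $C$ being a subalgebra of $B$ with $1_C=1_B$ — precisely the $C$-structure with respect to which $\tau_CM$ is a right $C$-module — while the left $B$-structure on $B$ is the one that makes $\Hom_C({}_BB_C,\tau_CM)$ a right $B$-module, consistent with Lemma~\ref{AM1}.

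For the second displayed isomorphism, I would apply the same adjunction to the $(C,B)$-bimodule ${}_CB_B$: the functor $-\otimes_C{}_CB_B=-\otimes_CB\colon\mathop{\text{mod}}C\to\mathop{\text{mod}}B$ is left adjoint to $\Hom_B({}_CB_B,-)\colon\mathop{\text{mod}}B\to\mathop{\text{mod}}C$, which immediately gives $\Hom_B(M\otimes_CB,X)\cong\Hom_C(M,\Hom_B({}_CB_B,X))$ for any $B$-module $X$, the right $C$-structure on $\Hom_B({}_CB_B,X)$ coming from the left $C$-action on $B$, i.e. restriction along $C\subseteq B$. Since each step is an instance of a standard adjunction (or of functoriality of $\Hom$), I do not anticipate any genuine obstacle: the entire substance is carried by Lemma~\ref{AM1}, and the remaining content of the lemma is the routine, if slightly fiddly, verification that the bimodule structures in play are the expected ones so that the adjunction isomorphisms are natural in $X$ (and in $M$).
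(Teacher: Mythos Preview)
Your proposal is correct and matches the paper's own proof, which simply states that the isomorphisms follow from Lemma~\ref{AM1} and the adjunction isomorphism. You have merely spelled out in detail what the paper leaves as a one-line remark.
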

\begin{proof}
These isomorphisms follow from Lemma $\ref{AM1}$ and the adjunction isomorphism.
\end{proof}
We note that $\_\otimes_BB_C$ and $\text{Hom}_B(_CB_B,\_)$ are two expressions for the forgetful functor $\mathop{\text{mod}}B\rightarrow\mathop{\text{mod}}C$.
 We end with the following useful fact.
 \begin{lemma}$\emph{\cite[Corollary~1.2]{AZ}}$.
 \label{AR submodule}
 $\tau_B(M\otimes_CB)$ is a submodule of $\tau_BM$.
 \end{lemma}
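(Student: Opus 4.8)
The plan is to compare $\tau_B M$ and $\tau_B(M\otimes_C B)$ through minimal projective presentations in $\operatorname{mod} B$ together with the Nakayama functor $\nu_B=D\Hom_B(-,B)$, using the standard fact that if $P_1\xrightarrow{h}P_0\to N\to 0$ is a minimal projective presentation of a $B$-module $N$, then $\tau_B N\cong\ker(\nu_B h\colon\nu_B P_1\to\nu_B P_0)$.

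The first point is that $M$ and $M\otimes_C B$ share a projective cover in $\operatorname{mod} B$. By Proposition \ref{SS, Proposition 3.6}(a) the canonical map $M\otimes_C B\to M$ is surjective with kernel $M\otimes_C E$, and $M\otimes_C E\subseteq\operatorname{rad}(M\otimes_C B)$ since $E\subseteq\operatorname{rad}B$; hence $M$ and $M\otimes_C B$ have the same top, so the same projective cover $P_0$ (concretely $P_0\cong Q_0\otimes_C B$, where $Q_0$ is the $C$-projective cover of $M$, using Proposition \ref{SS, Proposition 3.4}(a)). Writing the two minimal presentations over this common $P_0$ and applying the horseshoe lemma to $0\to M\otimes_C E\to M\otimes_C B\to M\to 0$, one obtains a short exact sequence of first syzygies
\[
0\longrightarrow \zO_B(M\otimes_C B)\longrightarrow \zO_B M\longrightarrow M\otimes_C E\longrightarrow 0
\]
and a lift $\phi\colon P_1^{M\otimes_C B}\to P_1^{M}$ of the left-hand inclusion with $g\phi=f$, where $f\colon P_1^{M\otimes_C B}\to P_0$ and $g\colon P_1^{M}\to P_0$ are the two presentation maps. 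Here $\zO_C M=\ker(Q_0\to M)$ and $\zO_B(M\otimes_C B)$ is the image of the natural map $\zO_C M\otimes_C B\to Q_0\otimes_C B=P_0$.

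The crux is to show this syzygy sequence is \emph{exact on tops}, i.e. that $\zO_B(M\otimes_C B)\cap\operatorname{rad}(\zO_B M)=\operatorname{rad}\zO_B(M\otimes_C B)$; then the projective cover decomposes as $P_1^{M}\cong P_1^{M\otimes_C B}\oplus P_1^{M\otimes_C E}$ and any lift $\phi$ is automatically a split monomorphism. I would verify the top condition by computing inside $P_0=Q_0\otimes_C B$: there one has $\zO_B M=\zO_B(M\otimes_C B)+(Q_0\otimes_C E)$, the submodule $\zO_B(M\otimes_C B)$ is the image of $\zO_C M\otimes_C B$, and $M\otimes_C E$ is killed by $E$; tracking how $\operatorname{rad}B$ and $E$ act on these three submodules should separate their contributions to $\operatorname{top}(\zO_B M)$. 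I expect this bookkeeping — essentially controlling how $Q_0\otimes_C E$ meets $\zO_B(M\otimes_C B)$ and $\operatorname{rad}(\zO_B M)$ — to be the main obstacle.

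Granting the top condition, apply the additive functor $\nu_B$ to $g\phi=f$ and restrict $\nu_B\phi$ to $\ker(\nu_B f)=\tau_B(M\otimes_C B)$: for $x$ in this kernel, $\nu_B g(\nu_B\phi(x))=\nu_B f(x)=0$, so $\nu_B\phi$ carries $\tau_B(M\otimes_C B)$ into $\ker(\nu_B g)=\tau_B M$, and injectively, since $\nu_B\phi$ is a split monomorphism (because $\phi$ is). This exhibits $\tau_B(M\otimes_C B)$ as a submodule of $\tau_B M$. As an alternative route, one could instead start from Lemma \ref{AM1}, $\tau_B(M\otimes_C B)\cong\Hom_C({}_BB_C,\tau_C M)$, and combine it with the corresponding description of $\tau_B M$ obtained from the coinduced module in Proposition \ref{SS, Proposition 3.6}(b); but the syzygy argument above looks the most direct.
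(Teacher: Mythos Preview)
The paper does not actually prove this lemma; it is quoted verbatim from \cite[Corollary~1.2]{AZ} without argument. So there is no in-paper proof to compare against, only the cited source, whose argument (via projective presentations and the Nakayama functor) is essentially the route you outline.

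Your plan is sound and the reduction to the ``exact on tops'' condition is exactly the right crux, but you stop at the point where the real content lies: you say you \emph{expect} the bookkeeping to go through, without doing it. That is the gap. Here is how to close it. With $P_0=Q_0\otimes_C B$ and the $C$-module splitting $P_0=Q_0\oplus(Q_0\otimes_C E)$, one checks directly that
\[
\zO_B(M\otimes_C B)=\zO_C M\cdot B=\zO_C M+\zO_C M\cdot E,\qquad \zO_B M=\zO_C M+Q_0\cdot E,
\]
with the first summands lying in $Q_0$ and the second in $Q_0\otimes_C E$. Using $\operatorname{rad}B=\operatorname{rad}C+E$ one gets $\operatorname{rad}_B\zO_B(M\otimes_C B)=\zO_C M\cdot\operatorname{rad}C+\zO_C M\cdot E$, while $\operatorname{rad}_B\zO_B M=\zO_C M\cdot\operatorname{rad}C+(\text{terms in }Q_0\otimes_C E)$. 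Now if $z=x+y\in\zO_B(M\otimes_C B)\cap\operatorname{rad}_B\zO_B M$ with $x\in\zO_C M\subseteq Q_0$ and $y\in\zO_C M\cdot E\subseteq Q_0\otimes_C E$, comparing $Q_0$-components forces $x\in\zO_C M\cdot\operatorname{rad}C$, hence $z\in\zO_C M\cdot\operatorname{rad}B=\operatorname{rad}_B\zO_B(M\otimes_C B)$. This gives the top condition, so any lift $\phi$ is a split monomorphism between projectives, and your Nakayama-functor argument then finishes the proof cleanly.

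One minor comment: you do not actually need the horseshoe lemma here. Once you know the induced presentation $Q_1\otimes_C B\to Q_0\otimes_C B\to M\otimes_C B\to 0$ is minimal (which you essentially argue) and that both modules share the projective cover $P_0=Q_0\otimes_C B$, the inclusion $\zO_B(M\otimes_C B)\subseteq\zO_B M$ inside $P_0$ is immediate from the factorisation $P_0\twoheadrightarrow M\otimes_C B\twoheadrightarrow M$, and the lift $\phi$ exists by projectivity. The alternative route via Lemma~\ref{AM1} that you mention at the end is less direct, since there is no equally clean formula for $\tau_B M$ in terms of $C$-data.
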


 \subsection{$\tau$-rigid modules and Bongartz $\tau$-complements}
We begin this section with several results on $\tau$-rigid modules.  We start with a definition.
 \begin{mydef} Let $M$ be a $C$-module.  We define $\mathop{Gen} M$ to be the class of all modules $X$ in $\mathop{\text{mod}}C$ generated by $M$, that is, the modules $X$ such that there exists an integer $d\geq0$ and an epimorphism $M^d\rightarrow X$ of $C$-modules.  Here, $M^d$ is the direct sum of $d$ copies of $M$.  Dually, we define $\mathop{Cogen}M$ to be the class of all modules $Y$ in $\mathop{\text{mod}}C$ cogenerated  by $M$, that is, the modules $Y$ such that there exist an integer $d\geq0$ and a monomorphism $Y\rightarrow M^d$ of $C$-modules.
\end{mydef} 
 The following result provides a characterization of $\tau$-rigid modules.
 \begin{prop}$\emph{\cite[Proposition~5.8]{AS}}$.
 \label{GenM Result}
   For M and N in $\mathop{\emph{mod}}C$, $\emph{Hom}_C(M,\tau_CN)=0$ if and only if $\emph{Ext}_C^1(N,\mathop{\emph{Gen}}M)=0$.
 \end{prop}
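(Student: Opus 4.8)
The plan is to derive both implications from the Auslander--Reiten formula, which I will use in the form $\Ext^1_C(N,X)\cong D\,\overline{\Hom}_C(X,\tau_C N)$, where $\overline{\Hom}_C(-,-)$ is $\Hom_C(-,-)$ modulo the maps factoring through an injective module and $D$ is the standard duality (see \cite{ASS}). Since $\tau_C$ kills projective summands and $\Ext^1_C(P,-)=0$ for $P$ projective, both sides of the asserted equivalence are unchanged if we delete the projective part of $N$; so I may assume $N$ has no nonzero projective summand, and then $\tau_C N=D\operatorname{Tr}N$ has no nonzero injective direct summand, a point needed below.

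For the implication $\Hom_C(M,\tau_C N)=0\Rightarrow\Ext^1_C(N,\Gen M)=0$: given $X\in\Gen M$, choose an epimorphism $M^d\twoheadrightarrow X$ and apply the left exact functor $\Hom_C(-,\tau_C N)$ to get a monomorphism $\Hom_C(X,\tau_C N)\hookrightarrow\Hom_C(M,\tau_C N)^d=0$. Thus $\Hom_C(X,\tau_C N)=0$, hence $\overline{\Hom}_C(X,\tau_C N)=0$, and the Auslander--Reiten formula yields $\Ext^1_C(N,X)=0$. As $X\in\Gen M$ was arbitrary, this gives $\Ext^1_C(N,\Gen M)=0$.

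For the converse, suppose $\Ext^1_C(N,\Gen M)=0$ and let $f\colon M\to\tau_C N$ be arbitrary with image $L\subseteq\tau_C N$. Then $L$ is a quotient of $M$, so $L\in\Gen M$, whence $\Ext^1_C(N,L)=0$ and therefore $\overline{\Hom}_C(L,\tau_C N)=0$. In particular the inclusion $\iota\colon L\hookrightarrow\tau_C N$ factors as $L\xrightarrow{\alpha}I\xrightarrow{\beta}\tau_C N$ with $I$ injective; since $\iota$ is monic so is $\alpha$, hence $\alpha$ is a split monomorphism and $L$ is a direct summand of $I$, so $L$ is injective. But an injective submodule of $\tau_C N$ is a direct summand of $\tau_C N$, and $\tau_C N$ has no nonzero injective summand; therefore $L=0$, i.e.\ $f=0$. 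As $f$ was arbitrary, $\Hom_C(M,\tau_C N)=0$.

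The only genuinely load-bearing step is the last one: turning $\overline{\Hom}_C(L,\tau_C N)=0$ into $L=0$ relies on $\tau_C N$ having no injective direct summand. Everything else is bookkeeping, the main thing to be careful about being the variance in the Auslander--Reiten formula, so that it is $N$ (the first argument of $\Ext^1$) that gets the $\tau_C$ and the modules of $\Gen M$ that sit in $\overline{\Hom}_C(-,\tau_C N)$.
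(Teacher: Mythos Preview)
Your argument is correct: the Auslander--Reiten duality $\Ext^1_C(N,X)\cong D\,\overline{\Hom}_C(X,\tau_C N)$ together with the reduction to $N$ without projective summands handles both directions cleanly, and the step showing that the image $L$ must be an injective summand of $\tau_C N$ (hence zero) is sound. There is nothing to compare against, since the paper does not prove this statement but merely quotes it from \cite{AS} as background.
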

To describe Bongartz $\tau$-complements, we will begin with the definition of a torsion class and torsion pair.
\begin{mydef} A pair of full subcategories $(\mathcal{T},\mathcal{F})$ of $\mathop{\text{mod}}C$ is called a $\emph{torsion pair}$ if the following conditions are satisfied:
   \begin{enumerate}
   \item[(a)] $\text{Hom}_C(M,N)=0$ for all $M\in\mathcal{T}$, $N\in\mathcal{F}.$
   \item[(b)] $\text{Hom}_C(M,-)|_\mathcal{F}=0$ implies $M\in\mathcal{T}.$
   \item[(c)] $\text{Hom}_C(-,N)|_\mathcal{T}=0$ implies $N\in\mathcal{F}.$
   \end{enumerate}
We call $\mathcal{T}$ and $\mathcal{F}$ a $\it{torsion~class}$ and $\it{torsionfree~class}$ respectively.
\end{mydef}
\begin{mydef}
Let $\mathcal{T}$ be a full subcategory of $\mathop{\text{mod}}C$ and $X\in\mathcal{T}$.  We say a $C$-module $X$ is $\text{Ext}$-$\it{projective}$ in $\mathcal{T}$ if $\text{Ext}_C^1(X,\mathcal{T})=0$.  We denote by $P(\mathcal{T})$ the direct sum of one copy of each indecomposable $\text{Ext}$-projective module in $\mathcal{T}$ up to isomorphism.
\end{mydef}
Given a torsion class $\mathcal{T}$, we need to know when a module $X\in\mathcal T$ is Ext-projective in $\mathcal{T}$.  We have the following two results.
\begin{prop}$\emph{\cite[Proposition~2.9]{AIR}}$
\label{Ext1}
Let $\mathcal{T}$ be a functorially finite torsion class and $M$ a $\tau_C$-rigid module.  Then $M\in\emph{add}P(\mathcal{T})$ if and only if $\emph{Gen}M\subseteq\mathcal{T}\subseteq~\!^{\perp}(\tau_CM)$.
\end{prop}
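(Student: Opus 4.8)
The plan is to rewrite the condition $M\in\add P(\mathcal{T})$ as the conjunction ``$M\in\mathcal{T}$'' and ``$M$ is $\Ext$-projective in $\mathcal{T}$'', and then to match these two clauses against the inclusions $\Gen M\subseteq\mathcal{T}$ and $\mathcal{T}\subseteq{}^{\perp}(\tau_C M)$ respectively. The first rewriting just unpacks the definition of $P(\mathcal{T})$: a module lies in $\add P(\mathcal{T})$ precisely when each of its indecomposable summands is an indecomposable $\Ext$-projective of $\mathcal{T}$, and one checks both directions using that a finite direct sum of $\Ext$-projectives is $\Ext$-projective, that $\Ext^1_C(-,\mathcal{T})$ sends summands to summands, and that a torsion class is closed under finite direct sums and under direct summands (being closed under quotients). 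Functorial finiteness of $\mathcal{T}$ is used only here, to ensure $P(\mathcal{T})$ is an honest module in $\mathop{\text{mod}}C$. Next, ``$M\in\mathcal{T}$'' and ``$\Gen M\subseteq\mathcal{T}$'' are interchangeable: one implication is immediate since $M\in\Gen M$, and the reverse holds because $\mathcal{T}$ is closed under finite direct sums and quotients. So the task reduces to showing, assuming $M\in\mathcal{T}$, that $M$ is $\Ext$-projective in $\mathcal{T}$ if and only if $\mathcal{T}\subseteq{}^{\perp}(\tau_C M)$.

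For this last equivalence the key tool is Proposition \ref{GenM Result}, applied with the two modules in the reversed roles: for any module $T$, $\Hom_C(T,\tau_C M)=0$ if and only if $\Ext^1_C(M,X)=0$ for every $X\in\Gen T$. If $\mathcal{T}\subseteq{}^{\perp}(\tau_C M)$, then for each $T\in\mathcal{T}$ we have $\Hom_C(T,\tau_C M)=0$, hence $\Ext^1_C(M,X)=0$ for all $X\in\Gen T$, and in particular $\Ext^1_C(M,T)=0$ since $T\in\Gen T$; as $T$ ranges over $\mathcal{T}$ this says exactly that $\Ext^1_C(M,\mathcal{T})=0$. Conversely, if $M$ is $\Ext$-projective in $\mathcal{T}$ and $T\in\mathcal{T}$, then $\Gen T\subseteq\mathcal{T}$ because $\mathcal{T}$ is closed under quotients, so $\Ext^1_C(M,X)=0$ for all $X\in\Gen T$, and Proposition \ref{GenM Result} returns $\Hom_C(T,\tau_C M)=0$, i.e.\ $T\in{}^{\perp}(\tau_C M)$. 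Assembling the two rewritings with this equivalence would give the proposition.

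I do not anticipate a real obstacle; the one point that matters is to route the argument through Proposition \ref{GenM Result} rather than the bare Auslander--Reiten formula, which would only annihilate the quotient of $\Hom_C(T,\tau_C M)$ by morphisms factoring through injective modules and hence would fall short of the full vanishing that ${}^{\perp}(\tau_C M)$ requires. The remaining ingredients --- the stability of $\Ext$-projectivity under direct sums and summands, the compatibility of torsion classes with $\Gen$, and the role of functorial finiteness in making $P(\mathcal{T})$ well defined --- are standard facts about torsion pairs. The only real care is bookkeeping: tracking which module occupies the ``$M$'' slot and which the ``$N$'' slot in Proposition \ref{GenM Result}, and noting that the standing hypothesis that $M$ is $\tau_C$-rigid is automatically consistent with either side, since $M\in\mathcal{T}\subseteq{}^{\perp}(\tau_C M)$ already forces $\Hom_C(M,\tau_C M)=0$.
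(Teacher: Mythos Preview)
Your argument is correct, and the reduction to Proposition~\ref{GenM Result} with the roles of the two modules swapped is exactly the right move. Note, however, that the paper does not give its own proof of this proposition: it is simply quoted from \cite[Proposition~2.9]{AIR} and stated without argument, so there is no in-paper proof to compare your proposal against. What you have written is essentially the standard proof of the cited result.
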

We note for torsion classes $\mathcal{T}$, being functorially finite is equivalent to the existence of $M$ in $\mathop{\text{mod}}C$ such that $\mathcal{T}=\text{Gen}M$.
\begin{prop}$\emph{\cite[Proposition~1.11]{ASS}}$
\label{Ext2}
Let $(\mathcal{T},\mathcal{F})$ be a torsion pair in $\mathop\emph{mod}C$ and $M\in\mathcal{T}$ an indecomposable $C$-module.  Then $M$ is $\emph{Ext}$-projective in $\mathcal{T}$ if and only if $\tau_CM\in\mathcal{F}$. 
\end{prop}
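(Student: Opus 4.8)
The plan is to derive the statement from Proposition~\ref{GenM Result} by translating both ``$M$ is Ext-projective in $\mathcal{T}$'' and ``$\tau_C M \in \mathcal{F}$'' into vanishing conditions on the same family of $\Hom$-groups. First I would record the elementary fact, immediate from axioms (a) and (c) in the definition of a torsion pair, that for any $C$-module $Y$ one has $Y \in \mathcal{F}$ if and only if $\Hom_C(N, Y) = 0$ for every $N \in \mathcal{T}$. Taking $Y = \tau_C M$, the condition $\tau_C M \in \mathcal{F}$ becomes: $\Hom_C(N, \tau_C M) = 0$ for all $N \in \mathcal{T}$. Next I would apply Proposition~\ref{GenM Result} with the roles of its two modules interchanged, which gives, for each $N$, the equivalence $\Hom_C(N, \tau_C M) = 0 \Leftrightarrow \Ext^1_C(M, \Gen N) = 0$. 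Hence $\tau_C M \in \mathcal{F}$ is equivalent to the statement that $\Ext^1_C(M, \Gen N) = 0$ for all $N \in \mathcal{T}$.

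It then remains to identify this last condition with Ext-projectivity of $M$, i.e.\ with $\Ext^1_C(M, \mathcal{T}) = 0$. Since $N \in \Gen N$ for every $N$, the condition $\Ext^1_C(M, \Gen N) = 0$ for all $N \in \mathcal{T}$ certainly implies $\Ext^1_C(M, N) = 0$ for all $N \in \mathcal{T}$. Conversely, a torsion class is closed under quotients and finite direct sums, so $N \in \mathcal{T}$ forces $\Gen N \subseteq \mathcal{T}$; thus $\Ext^1_C(M, \mathcal{T}) = 0$ implies $\Ext^1_C(M, \Gen N) = 0$ for each $N \in \mathcal{T}$. Chaining the equivalences yields the proposition. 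I do not expect a genuine obstacle: the only delicate points are keeping straight which module plays which role in Proposition~\ref{GenM Result} and invoking the (standard) closure properties of torsion classes. I would also remark that indecomposability of $M$ is not actually used; the statement is phrased for indecomposable $M$ merely because that is the form used later and it dovetails with Proposition~\ref{Ext1}.

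Should one prefer to avoid Proposition~\ref{GenM Result}, I would instead argue directly with the almost split sequence $0 \to \tau_C M \xrightarrow{f} E \xrightarrow{g} M \to 0$ (the case of $M$ projective being trivial, as then $\tau_C M = 0 \in \mathcal{F}$). For the ``if'' direction, given any short exact sequence $0 \to N \to X \xrightarrow{\pi} M \to 0$ with $N \in \mathcal{T}$, if $\pi$ were not a split epimorphism it would factor as $\pi = g\varphi$; then $\varphi$ carries $\ker \pi = N$ into $\ker g = \tau_C M$, and the resulting map $N \to \tau_C M$ is zero since $N \in \mathcal{T}$ and $\tau_C M \in \mathcal{F}$, so $\varphi$ factors through $\pi$ and produces a section of $g$, contradicting non-splitness of the almost split sequence. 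For the ``only if'' direction, writing $t(\tau_C M)$ for the torsion submodule, Ext-projectivity gives $\Ext^1_C(M, t(\tau_C M)) = 0$, so in the long exact sequence attached to $0 \to t(\tau_C M) \to \tau_C M \to \tau_C M/t(\tau_C M) \to 0$ the map $\Ext^1_C(M, \tau_C M) \to \Ext^1_C(M, \tau_C M/t(\tau_C M))$ is injective; hence the nonzero class of the almost split sequence pushes out to a non-split extension along $\tau_C M \to \tau_C M/t(\tau_C M)$, which forces that canonical surjection to be a split monomorphism and therefore $t(\tau_C M) = 0$. The main thing to justify on this route is the lemma that a pushout of an almost split sequence along a map $u$ can be non-split only when $u$ is a split monomorphism, which follows from the left almost split property of $f$.
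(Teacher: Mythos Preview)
The paper does not supply a proof of this proposition; it is quoted from \cite{ASS} and used as a black box throughout. So there is nothing to compare against except correctness, and on that score both of your routes are sound.

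Your first argument is correct and self-contained within the paper's toolkit: the equivalence $\tau_C M\in\mathcal{F}\Leftrightarrow \Hom_C(N,\tau_CM)=0$ for all $N\in\mathcal{T}$ is exactly axioms (a) and (c) of the torsion-pair definition, Proposition~\ref{GenM Result} converts each such vanishing into $\Ext^1_C(M,\Gen N)=0$, and closure of $\mathcal{T}$ under quotients and finite sums (standard, though not stated explicitly in the paper) identifies this with $\Ext^1_C(M,\mathcal{T})=0$. Your remark that indecomposability plays no role here is accurate; this route in fact yields the statement for arbitrary $M\in\mathcal{T}$.

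Your second argument via the almost split sequence is the classical one and is closer in spirit to how the cited source argues. Here indecomposability of $M$ is genuinely used, since it is needed for the almost split sequence ending in $M$ to exist. Both halves are correct: in the ``if'' direction the factorisation $\pi=g\varphi$ followed by $\varphi|_N=0$ produces a section of $g$, and in the ``only if'' direction the injectivity of $\Ext^1_C(M,\tau_CM)\to\Ext^1_C(M,\tau_CM/t(\tau_CM))$ together with the pushout lemma (which you derive correctly from the left almost split property of $f$) forces $t(\tau_CM)=0$. No gap.
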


It was shown in $\cite{AIR}$ that, for every $\tau_C$-rigid module $M$, there exists a module $U$ such that $M\oplus U$ is $\tau_C$-tilting.  

\begin{theorem}$\emph{\cite[Theorem~2.10]{AIR}}$
\label{Complement}
A $\tau_C$-rigid $C$-module is a direct summand of some $\tau_C$-tilting $C$-module.
\end{theorem}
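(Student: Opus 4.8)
The plan is to run Bongartz's completion argument in the $\tau$-tilting setting. Fix a $\tau_C$-rigid module $M$ and set
\[
\mathcal{T}\;=\;{}^{\perp}(\tau_C M)\;=\;\bigl\{\,X\in\mathop{\text{mod}}C:\Hom_C(X,\tau_C M)=0\,\bigr\}.
\]
First I would check $\mathcal{T}$ is a torsion class: it is closed under quotients because an epimorphism $X\twoheadrightarrow Y$ gives a monomorphism $\Hom_C(Y,\tau_C M)\hookrightarrow\Hom_C(X,\tau_C M)$, it is closed under extensions by the long exact sequence in the first variable, and a subcategory of $\mathop{\text{mod}}C$ closed under quotients and extensions is the torsion part of a unique torsion pair $(\mathcal{T},\mathcal{F})$. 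Then I would record
\[
\Gen M\;\subseteq\;\mathcal{T}\;\subseteq\;{}^{\perp}(\tau_C M),
\]
the second inclusion being an equality by definition, and the first holding because an epimorphism $M^{d}\twoheadrightarrow X$ induces $\Hom_C(X,\tau_C M)\hookrightarrow\Hom_C(M,\tau_C M)^{d}=0$ since $M$ is $\tau_C$-rigid.

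The crux — and the step I expect to be the main obstacle — is showing that $\mathcal{T}={}^{\perp}(\tau_C M)$ is functorially finite and sincere. Here the guiding construction is a Bongartz-type universal extension. Proposition \ref{GenM Result} with $N=M$ (and $M\in\Gen M$) first yields $\Ext^1_C(M,M)=0$. Choosing a $k$-basis $\xi_1,\dots,\xi_n$ of $\Ext^1_C(M,C)$, one forms the extension
\[
0\longrightarrow C\longrightarrow W\longrightarrow M^{n}\longrightarrow 0
\]
with class $(\xi_1,\dots,\xi_n)\in\Ext^1_C(M^{n},C)$, so that the connecting map $\Hom_C(M,M^{n})\to\Ext^1_C(M,C)$ is surjective and hence $\Ext^1_C(M,W)=0$. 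Pushing out a projective presentation $0\to Y\to C^{k}\to X\to 0$ of an arbitrary $X\in\mathcal{T}$ along the $k$-fold direct sum of this sequence, and using that $\Ext^1_C(M^{nk},X)=0$ for $X\in\mathcal{T}$ to split the resulting sequence $0\to X\to Z\to M^{nk}\to 0$, exhibits $X$ as a quotient of $W^{k}$; thus $\mathcal{T}\subseteq\Gen W$. The reverse inclusion $\Gen W\subseteq\mathcal{T}$ — equivalently, $\Ext^1_C(M,\Gen W)=0$ by Proposition \ref{GenM Result} — is the delicate point, since a $\tau_C$-rigid module need not have projective dimension at most one and so $\Ext^1_C(M,-)$ must be controlled along these $\Gen$-constructions; once it is established, $\mathcal{T}=\Gen W$ is functorially finite, and since $W$ contains $C$ as a submodule $\mathcal{T}$ contains the sincere module $W$. (Alternatively one may simply invoke the known functorial finiteness of such $\tau$-perpendicular subcategories.)

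Granting that $\mathcal{T}$ is a sincere functorially finite torsion class, the rest is formal. Proposition \ref{Ext1} applied to $\Gen M\subseteq\mathcal{T}\subseteq{}^{\perp}(\tau_C M)$ gives $M\in\add P(\mathcal{T})$; set $U=P(\mathcal{T})$, so $M\in\add U$. Each indecomposable summand $X$ of $U$ is $\Ext$-projective in $\mathcal{T}$, hence $\tau_C X\in\mathcal{F}$ by Proposition \ref{Ext2}, so $\Hom_C(\mathcal{T},\tau_C X)=0$; as $U\in\mathcal{T}$ this yields $\Hom_C(U,\tau_C U)=0$, i.e.\ $U$ is $\tau_C$-rigid. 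Finally, for a sincere functorially finite torsion class the indecomposable $\Ext$-projectives are in bijection with the simple $C$-modules, so $|U|=|C|$ and $U$ is $\tau_C$-tilting. Since $M$ is a direct summand of $U$, this proves the theorem.
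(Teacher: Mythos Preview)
The paper does not prove this statement; it simply quotes \cite[Theorem~2.10]{AIR} and then records the construction via $P({}^{\perp}(\tau_CM))$, invoking \cite[Lemma~2.11]{AIR} (stated here as Lemma~\ref{AIR, Lemma 2.11}) for the fact that ${}^{\perp}(\tau_CM)$ is a sincere functorially finite torsion class. So there is nothing to compare your argument to beyond that outline, and your parenthetical fallback (``simply invoke the known functorial finiteness'') is exactly what the paper does.

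Your attempt to \emph{prove} functorial finiteness via the classical Bongartz extension does have a genuine gap, and it is precisely the one you flag. The inclusion $\mathcal{T}\subseteq\Gen W$ is fine, but $\Gen W\subseteq\mathcal{T}$ generally fails: apply $\Hom_C(-,\tau_CM)$ to $0\to C\to W\to M^{n}\to 0$ to get
\[
0\longrightarrow \Hom_C(W,\tau_CM)\longrightarrow \Hom_C(C,\tau_CM)\cong \tau_CM,
\]
and there is no mechanism forcing this subspace to vanish when $M$ is not projective. Thus $W\notin{}^{\perp}(\tau_CM)$ in general, so $\Gen W\neq\mathcal{T}$, and since functorial finiteness of a torsion class is characterised by $\mathcal{T}=\Gen T$ for some $T$ (as the paper notes after Proposition~\ref{Ext1}), the weaker containment $\mathcal{T}\subseteq\Gen W$ does not by itself yield the conclusion. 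The actual argument in \cite{AIR} does not run the naive Bongartz completion on $C$; it establishes functorial finiteness of ${}^{\perp}(\tau_CM)$ by other means. Your final step also leans on a further nontrivial AIR result (that $P(\mathcal{T})$ is $\tau$-tilting for sincere functorially finite $\mathcal{T}$), so even granting functorial finiteness you are essentially citing the source you are trying to reprove.
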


This module is called the Bongartz $\tau$-complement of $M$.  To give an explicit construction, we define 
 \[
 ^{\perp}(\tau_CM)=\{X\in\mathop{\text{mod}}C~|~\text{Hom}_C(X,\tau_CM)=0\}.  
 \] 
  It was shown in $\cite{AIR}$ that $^{\perp}(\tau_CM)$ forms a torsion class.
  \begin{lemma}$\emph{\cite[Lemma~2.11]{AIR}}$  
 \label{AIR, Lemma 2.11}
 For any $\tau_C$-rigid module $M$, we have a sincere functorially finite torsion class $^{\perp}(\tau_CM)$.  The corresponding torsionfree class is $\emph{Cogen}(\tau_CM)$ and $(^{\perp}(\tau_CM),\emph{Cogen}(\tau_CM))$ is a torsion pair.
 \end{lemma}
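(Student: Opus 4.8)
The plan is to exhibit the pair $\big({}^{\perp}(\tau_CM),\mathrm{Cogen}(\tau_CM)\big)$ as a torsion pair directly, which at once yields that ${}^{\perp}(\tau_CM)$ is a torsion class and that $\mathrm{Cogen}(\tau_CM)$ is its torsionfree class, and then to extract functorial finiteness and sincerity from a Bongartz completion of $M$ supplied by Theorem~\ref{Complement}.

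\emph{The torsion pair.} Since $\mathop{\text{mod}}C$ is a length category, a full subcategory closed under submodules and extensions (the latter forces closure under finite direct sums as well) is the torsionfree class of a unique torsion pair, so it suffices to check that $\mathrm{Cogen}(\tau_CM)$ is closed under extensions, i.e.\ that $\tau_CM$ is Ext-injective in $\mathrm{Cogen}(\tau_CM)$. We may assume $M$ has no projective summands, since these change neither $\tau_CM$ nor $\mathrm{Cogen}(\tau_CM)$, so that $\tau_C^{-1}\tau_CM\cong M$; then the dual of Proposition~\ref{GenM Result} converts the desired $\Ext^1_C\big(\mathrm{Cogen}(\tau_CM),\tau_CM\big)=0$ into $\Hom_C(M,\tau_CM)=0$, which holds because $M$ is $\tau_C$-rigid. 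Let $(\mathcal T,\mathrm{Cogen}(\tau_CM))$ be the resulting torsion pair, so $\mathcal T=\{X:\Hom_C(X,\mathrm{Cogen}(\tau_CM))=0\}$. As $\tau_CM\in\mathrm{Cogen}(\tau_CM)$ we get $\mathcal T\subseteq{}^{\perp}(\tau_CM)$; conversely, if $\Hom_C(X,\tau_CM)=0$ then for any $Y\hookrightarrow(\tau_CM)^d$ we have $\Hom_C(X,Y)\hookrightarrow\Hom_C(X,\tau_CM)^d=0$, so $\mathcal T={}^{\perp}(\tau_CM)$. This proves that ${}^{\perp}(\tau_CM)$ is a torsion class, that its torsionfree class is $\mathrm{Cogen}(\tau_CM)$, and that the two form a torsion pair.

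\emph{Functorial finiteness, the main obstacle.} By Theorem~\ref{Complement}, choose $U$ with $T:=M\oplus U$ a $\tau_C$-tilting module. Proposition~\ref{GenM Result} applied to $T$ gives $\Ext^1_C(T,\Gen T)=0$, so $\Gen T$ is closed under extensions, hence is a functorially finite torsion class, and $\Hom_C(T,\tau_CT)=0$ yields $\Gen T\subseteq{}^{\perp}(\tau_CM)$. The substantive point is the reverse inclusion. Given $X\in{}^{\perp}(\tau_CM)$, the canonical sequence $0\to\mathrm{tr}_T(X)\to X\to X'\to 0$ has $\mathrm{tr}_T(X)\in\Gen T$, and since $\Ext^1_C(T,\mathrm{tr}_T(X))=0$ every map $T\to X'$ lifts to $X$ and therefore vanishes, so $\Hom_C(T,X')=0$; at the same time $X'\in{}^{\perp}(\tau_CM)$ as a quotient of $X$. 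One must now conclude $X'=0$. This is exactly the universal property distinguishing the Bongartz completion among all $\tau_C$-tilting completions of $M$, and the honest way to obtain it is to use the universal-extension construction underlying Theorem~\ref{Complement} — which in fact produces $T$ with $\Gen T={}^{\perp}(\tau_CM)$ — or, equivalently, to build a left ${}^{\perp}(\tau_CM)$-approximation of ${}_CC$ in the manner of Bongartz; the bare statement of Theorem~\ref{Complement} does not by itself pin down which $T$ one gets. Granting ${}^{\perp}(\tau_CM)=\Gen T$, functorial finiteness follows, and Proposition~\ref{Ext1} (now applicable, since ${}^{\perp}(\tau_CM)$ is a functorially finite torsion class and $\Gen M\subseteq{}^{\perp}(\tau_CM)$) shows $M\in\add P({}^{\perp}(\tau_CM))$, with $P({}^{\perp}(\tau_CM))=T$ up to additive closure.

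\emph{Sincerity.} Finally, $T$ is $\tau_C$-tilting with $|T|=|C|$, hence sincere: if some simple $S$ were not a composition factor of any module in $\Gen T$, then $\Gen T$ would be a torsion class inside $\mathop{\text{mod}}(C/CeC)$ for the idempotent $e$ at $S$, an algebra with fewer simple modules, contradicting $|P(\Gen T)|=|T|=|C|$. Since ${}^{\perp}(\tau_CM)\supseteq\Gen T$ contains the sincere module $T$, it is sincere, and all the assertions of the statement are verified.
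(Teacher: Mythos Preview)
The paper does not prove this lemma; it is cited from \cite[Lemma~2.11]{AIR} without argument, so there is no proof in the paper to compare against.

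Your torsion-pair argument is correct: the dual Auslander--Smal{\o} formula converts $\tau_C$-rigidity of $M$ into $\Ext^1_C(\mathrm{Cogen}(\tau_CM),\tau_CM)=0$, whence $\mathrm{Cogen}(\tau_CM)$ is closed under extensions and is therefore a torsionfree class with torsion class precisely ${}^{\perp}(\tau_CM)$.

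The functorial-finiteness step, however, has a gap you name but do not close, and it is a genuine one. The bare statement of Theorem~\ref{Complement} yields \emph{some} $\tau$-tilting completion $T=M\oplus U$, and you correctly observe that ${}^{\perp}(\tau_CM)=\Gen T$ need not hold for an arbitrary such $T$: that equality is equivalent (via Theorem~\ref{AIR, 2.12} together with your torsion-pair identification) to $\tau_CU\in\mathrm{Cogen}(\tau_CM)$, which singles out the Bongartz completion among all completions. But in the present paper, following \cite{AIR}, the Bongartz completion is \emph{defined} as $P({}^{\perp}(\tau_CM))$, a definition that already presupposes the functorial finiteness you are trying to establish; and in \cite{AIR} the proof of Theorem~2.10 itself rests on the same Bongartz-type construction that yields the present lemma. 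So the appeal to Theorem~\ref{Complement} is circular. What is actually required --- and what you gesture at with ``the universal-extension construction underlying Theorem~\ref{Complement}'' --- is to build a left ${}^{\perp}(\tau_CM)$-approximation of $C$ directly; this simultaneously exhibits ${}^{\perp}(\tau_CM)=\Gen T$ and proves functorial finiteness. Since your sincerity argument also rests on this unproved equality, the proposal is a sound outline that correctly locates the difficulty but does not supply a self-contained proof.
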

 
  Then $P(^{\perp}(\tau_CM))$ is a $\tau_C$-tilting module satisfying  $M\in\text{add}(P(^{\perp}(\tau_CM)))$.  Let $U$ be the direct sum of one copy of each indecomposable $\text{Ext}$-projective module in $^{\perp}(\tau_CM)$ up to isomorphism 
that does not belong to $\text{add}M$.  Then $M\oplus U$ is $\tau_C$-tilting and $U$ is the Bongartz $\tau$-complement of $M$.  We end this section with a needed result on the number of summands of a $\tau$-rigid module.  We say a module $M$ is $\emph{basic}$ if all indecomposable summands of $M$ are pairwise nonisomorphic.
 
 \begin{prop}$\emph{\cite[Proposition~1.3]{AIR}}$
 \label{summands}
 Any basic $\tau_C$-rigid $M$ satisfies $|M|\leq|C|$.
 \end{prop}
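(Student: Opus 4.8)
My plan is to attach to each indecomposable summand of $M$ its class in the Grothendieck group of projectives and show these classes are linearly independent. Write $M=M_1\oplus\cdots\oplus M_t$ with the $M_i$ pairwise nonisomorphic indecomposables, and fix for each $i$ a minimal projective presentation $Q_1^i\xrightarrow{f_i}Q_0^i\to M_i\to 0$; put $g^{M_i}=[Q_0^i]-[Q_1^i]$ in $K_0(\mathrm{proj}\,C)$, a free abelian group of rank $|C|$ (one basis element per indecomposable projective $C$-module). Since $t=|M|$, it suffices to prove that $g^{M_1},\dots,g^{M_t}$ are linearly independent over $\mathbb{Q}$: then $t\le|C|$ and we are done.

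To see this, suppose $\sum_i a_ig^{M_i}=0$ with the $a_i\in\mathbb{Z}$ not all zero (clear denominators), and set $N=\bigoplus_{a_i>0}M_i^{a_i}$ and $N'=\bigoplus_{a_i<0}M_i^{-a_i}$. Both lie in $\add M$, so $N\oplus N'$ is $\tau_C$-rigid, and since the $M_i$ are pairwise nonisomorphic $N$ and $N'$ have no common indecomposable summand. Additivity of minimal projective presentations under direct sums turns the relation into $[Q_0^N]-[Q_1^N]=[Q_0^{N'}]-[Q_1^{N'}]$, hence $[Q_0^N]\oplus[Q_1^{N'}]$ and $[Q_0^{N'}]\oplus[Q_1^N]$ agree in $K_0(\mathrm{proj}\,C)$, so by uniqueness of decomposition $Q_0^N\oplus Q_1^{N'}\cong Q_0^{N'}\oplus Q_1^N$ and $g^N=g^{N'}$. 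The homological input I would then use is the standard characterization: for $C$-modules $X,Y$ with minimal projective presentation $Q_1^Y\xrightarrow{f}Q_0^Y\to Y\to 0$, one has $\Hom_C(X,\tau_C Y)=0$ if and only if $f^{*}\colon\Hom_C(Q_0^Y,X)\to\Hom_C(Q_1^Y,X)$ (precomposition with $f$) is surjective. Applying $\Hom_C(-,X)$ to the presentation and using $\dim_k\Hom_C(Q,X)=[Q]\cdot\underline{\dim}\,X$ (dot product of the class $[Q]$ with the dimension vector), this gives $\dim_k\Hom_C(Y,X)=g^Y\cdot\underline{\dim}\,X$ whenever $\Hom_C(X,\tau_C Y)=0$. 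Running this over the four pairs drawn from $\{N,N'\}$ and using $g^N=g^{N'}$ yields $\dim_k\Hom_C(N,N)=\dim_k\Hom_C(N',N)$ and $\dim_k\Hom_C(N',N')=\dim_k\Hom_C(N,N')$.

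The step I expect to be the real obstacle is to upgrade this, together with $Q_0^N\oplus Q_1^{N'}\cong Q_0^{N'}\oplus Q_1^N$, to an isomorphism $N\cong N'$. Since $N$ and $N'$ share no indecomposable summand, $N\cong N'$ forces $N=N'=0$; one then checks that all $a_i=0$, the desired contradiction. (For the edge case where one of $N,N'$ is already zero, note that a nonzero $\tau_C$-rigid module has nonzero class: otherwise its minimal presentation is $Q\xrightarrow{f}Q\to N\to 0$, and the projection $\pi\colon Q\to N$ satisfies $\pi f=0$ with $\pi\ne 0$, so the endomorphism $f^{*}$ of the finite-dimensional space $\Hom_C(Q,N)$ is not injective, hence not surjective.) In other words, the heart of the matter is that a $\tau_C$-rigid module is determined by the class of its minimal projective presentation, and proving this cleanly requires a structural exchange argument rather than dimension counting.

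Finally, I would remark that the statement also drops out at once from Theorem~\ref{Complement}: a basic $\tau_C$-rigid module $M$ is a direct summand of some $\tau_C$-tilting module $T$, which by definition has exactly $|C|$ pairwise nonisomorphic indecomposable summands, so $|M|\le|T|=|C|$. In the usual development this proposition is proved first and then used to establish the complement theorem, so this short route is somewhat circular; the argument above via minimal projective presentations is the self-contained one.
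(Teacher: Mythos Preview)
The paper does not prove this proposition at all; it is quoted verbatim from \cite{AIR} and used as a black box. So there is no ``paper's own proof'' to compare your attempt against.

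On the substance of your attempt: the $g$-vector route you take is the standard one in \cite{AIR}, and everything you actually carry out is correct --- the reduction to a relation $g^N=g^{N'}$ with $N,N'$ sharing no summand, the formula $\dim_k\Hom_C(Y,X)=g^Y\cdot\underline{\dim}\,X$ under the hypothesis $\Hom_C(X,\tau_CY)=0$, the four dimension equalities, and the edge-case argument that a nonzero $\tau_C$-rigid module has nonzero $g$-vector. But, as you candidly flag, the proof is \emph{incomplete}: you do not show that the equalities $\dim\End(N)=\dim\Hom(N',N)$ and $\dim\End(N')=\dim\Hom(N,N')$ force $N\cong N'$ (equivalently $N=N'=0$). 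This step is genuine and does not follow from dimension-counting alone; in \cite{AIR} it is essentially the content of their Theorem~5.1 (that $\tau$-rigid pairs are determined by their $g$-vectors), which is proved by a separate argument. An alternative is to pass to two-term presilting complexes in $K^b(\mathrm{proj}\,C)$ and invoke the corresponding bound there. Either way, something beyond what you have written is required, so as it stands the argument has a real gap.

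Your closing remark is accurate: once Theorem~\ref{Complement} is in hand, the bound is immediate, but in the logical order of \cite{AIR} the present proposition comes first, so that shortcut is circular.
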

We will need the following characterization of a $\tau$-rigid module being $\tau$-tilting.
 \begin{theorem}$\emph{\cite[Theorem~2.12]{AIR}}$
 \label{AIR, 2.12}
  Let $M$ be a $\tau_C$-rigid module.  Then $M$ is $\tau_C$-tilting if and only if $^{\perp}(\tau_CM)=\emph{Gen}M$.
  \end{theorem}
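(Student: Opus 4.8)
\emph{Strategy.} The plan is to read the statement off the \cite{AIR} correspondence between functorially finite torsion classes and (support) $\tau_C$-tilting modules, together with a count of indecomposable summands; the only non-formal ingredient will be that a functorially finite torsion class $\mathcal T$ satisfies $\Gen P(\mathcal T)=\mathcal T$. Two preliminary observations set the stage. First, for every $\tau_C$-rigid $M$ one has $\Gen M\subseteq{}^{\perp}(\tau_CM)$: by Proposition \ref{GenM Result}, $\tau_C$-rigidity of $M$ is exactly $\Ext_C^1(M,\Gen M)=0$, so for $X\in\Gen M$ we get $\Ext_C^1(M,\Gen X)=0$ (since $\Gen X\subseteq\Gen M$), whence $\Hom_C(X,\tau_CM)=0$ by Proposition \ref{GenM Result} again. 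Thus only the reverse inclusion is at stake. Second, by Lemma \ref{AIR, Lemma 2.11} and the discussion following Theorem \ref{Complement}, the module $T:=P({}^{\perp}(\tau_CM))$ is $\tau_C$-tilting, $M\in\add T$, and $\Gen T={}^{\perp}(\tau_CM)$.

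\emph{Forward direction.} Assume $M$ is $\tau_C$-tilting; taking $M$ basic, $|M|=|C|$. Since $T$ is also $\tau_C$-tilting, $|T|=|C|$ as well; as $M$ is a direct summand of $T$ and both are basic with exactly $|C|$ pairwise nonisomorphic indecomposable summands, $\add M=\add T$, and therefore $\Gen M=\Gen T={}^{\perp}(\tau_CM)$.

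\emph{Converse.} Assume ${}^{\perp}(\tau_CM)=\Gen M=:\mathcal T$. By Lemma \ref{AIR, Lemma 2.11}, $\mathcal T$ is a sincere functorially finite torsion class, and Proposition \ref{Ext1} (applied with this $\mathcal T$, for which the chain $\Gen M\subseteq\mathcal T\subseteq{}^{\perp}(\tau_CM)$ is a string of equalities) gives $M\in\add T$, so $|M|\le|C|$. It remains to prove $\add M=\add T$. Now $M$ and $T$ are both $\tau_C$-rigid, both Ext-projective in $\mathcal T$ (for $M$ this is $\Ext_C^1(M,\Gen M)=0$; for $T$ it holds by construction), and both generators of $\mathcal T$ (for $M$ by hypothesis, for $T$ by the remark above). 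The \cite{AIR} theory identifies the Ext-projective generator of a functorially finite torsion class with its associated $\tau_C$-tilting module $P(\mathcal T)$, unique up to $\add$; hence $\add M=\add T$, so $|M|=|C|$ and $M$ is $\tau_C$-tilting.

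\emph{Main obstacle.} The real work is the equality $\Gen P(\mathcal T)=\mathcal T$ and the uniqueness invoked in the converse. Unwound, one would take an indecomposable Ext-projective summand $N$ of $T$ with $N\notin\add M$, choose a minimal right $\add M$-approximation $0\to L\to M'\to N\to 0$ (surjective because $N\in\mathcal T=\Gen M$), and then try to split it — which forces one to push the kernel $L$ back inside $\mathcal T$, and that is precisely the Bongartz-type mechanism encoded in Lemma \ref{AIR, Lemma 2.11} (the realization of ${}^{\perp}(\tau_CM)$ as $\Gen$ of an explicit module). Proposition \ref{Ext2} (giving $\tau_CN\in\operatorname{Cogen}(\tau_CM)$, hence that $M\oplus N$ is $\tau_C$-rigid) and the summand bound of Proposition \ref{summands} serve only to organize this last step once the torsion-theoretic input is available.
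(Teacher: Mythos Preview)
The paper does not prove this theorem: it is quoted from \cite[Theorem~2.12]{AIR} as a background fact and used as a black box (once, in the final proposition of Section~3). There is therefore no ``paper's own proof'' to compare against.

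On the substance of your argument: the forward direction is clean and correct---once $T=P({}^{\perp}(\tau_CM))$ is known to be $\tau_C$-tilting with $M\in\add T$, a summand count gives $\add M=\add T$ and hence $\Gen M={}^{\perp}(\tau_CM)$. The converse, however, leans on the assertion that an Ext-projective generator of a functorially finite torsion class is unique up to $\add$, and you rightly flag this as the ``main obstacle''. Your sketch (approximate an indecomposable $N\in\add T\setminus\add M$ by a right $\add M$-approximation $0\to L\to M'\to N\to 0$ and split) stops exactly at the hard step: one must show $L\in\mathcal T$, and the approximation property only yields $\Ext_C^1(M,L)=0$, which is weaker than $L\in{}^{\perp}(\tau_CM)$. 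In \cite{AIR} this is handled not by a direct splitting argument but via the Bongartz-type construction underlying their Theorem~2.10 and the torsion-class bijection of their Theorem~2.7; invoking ``the \cite{AIR} theory identifies\ldots'' at this point is essentially assuming what is to be proved. So your outline is correct and matches the architecture of the original proof, but the converse as written is a proof sketch rather than a proof.
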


 The following result provides a useful restriction on $X$ when $X$ is an indecomposable summand of a $\tau_C$-tilting module.
\begin{prop}$\emph{\cite[Proposition~2.22]{AIR}}$
\label{ind}
Let $T=X\oplus U$ be a basic $\tau$-tilting $C$-module, with $X$ indecomposable.  Then exactly one of $^{\perp}(\tau_CU)\subseteq~\!^{\perp}(\tau_CX)$ and $X\in\emph{Gen}U$ holds.
\end{prop}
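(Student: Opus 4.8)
The plan is to recast condition~(i) in terms of the Bongartz $\tau$-complement of $U$ itself, and then verify the two implications (i)$\Rightarrow\neg$(ii) and $\neg$(i)$\Rightarrow$(ii). Since $T=X\oplus U$ is basic $\tau_C$-tilting with $X$ indecomposable, $X$ is not a summand of $U$, so $|U|=|T|-1=|C|-1$ and $U$ is almost complete $\tau_C$-tilting. Put $\mathcal T:={}^{\perp}(\tau_C U)$, a functorially finite torsion class by Lemma~\ref{AIR, Lemma 2.11}, so $P(\mathcal T)$ is $\tau_C$-tilting. Because $U$ is $\tau_C$-rigid with $\Gen U\subseteq\mathcal T={}^{\perp}(\tau_C U)$, Proposition~\ref{Ext1} gives $U\in\add P(\mathcal T)$; as $P(\mathcal T)$ is a basic $\tau_C$-tilting module containing $U$, we get $P(\mathcal T)\cong U\oplus U_0$ with $U_0$ indecomposable and $U_0\notin\add U$ --- this $U_0$ is the Bongartz $\tau$-complement of $U$. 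Applying Proposition~\ref{Ext1} to $U_0\in\add P(\mathcal T)$ gives $\mathcal T\subseteq{}^{\perp}(\tau_C U_0)$, hence ${}^{\perp}(\tau_C(U\oplus U_0))=\mathcal T\cap{}^{\perp}(\tau_C U_0)=\mathcal T$, and Theorem~\ref{AIR, 2.12} then yields $\mathcal T=\Gen(U\oplus U_0)$. I also record the observation that $U_0\notin\Gen U$: otherwise $U\oplus U_0$ is a quotient of a power of $U$, so $\Gen U=\Gen(U\oplus U_0)=\mathcal T={}^{\perp}(\tau_C U)$, which by Theorem~\ref{AIR, 2.12} would make $U$ a $\tau_C$-tilting module, contradicting $|U|=|C|-1$.

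The key step is the reduction: \emph{condition~(i) holds if and only if $X\cong U_0$}. Since $X$ is a summand of the $\tau_C$-tilting module $T$, Theorem~\ref{AIR, 2.12} gives $X\in\Gen T={}^{\perp}(\tau_C T)\subseteq{}^{\perp}(\tau_C U)=\mathcal T$, whence $\Gen X\subseteq\mathcal T$. As $X$ is $\tau_C$-rigid, Proposition~\ref{Ext1} shows $X\in\add P(\mathcal T)$ if and only if $\mathcal T\subseteq{}^{\perp}(\tau_C X)$, i.e.\ if and only if~(i) holds. Since $P(\mathcal T)=U\oplus U_0$ is basic with $X$ indecomposable and $X\notin\add U$, membership $X\in\add P(\mathcal T)$ forces $X\cong U_0$; conversely $X\cong U_0$ gives~(i) from $\mathcal T\subseteq{}^{\perp}(\tau_C U_0)={}^{\perp}(\tau_C X)$. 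In particular (i) and (ii) cannot both hold: if~(i) holds then $X\cong U_0$, and $U_0\notin\Gen U$ by the observation, so~(ii) fails.

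What remains --- and what I expect to be the main obstacle --- is to show that if~(i) fails then~(ii) holds. Assuming $\neg$(i), the reduction gives $X\not\cong U_0$, so $T_1:=U\oplus U_0$ and $T_2:=U\oplus X$ are two distinct basic $\tau_C$-tilting modules sharing the almost complete summand $U$, and $\Gen T_2={}^{\perp}(\tau_C X)\cap\mathcal T\subsetneq\mathcal T=\Gen T_1$ because~(i) fails. Thus $T_2$ is obtained from $T_1$ by $\tau$-tilting mutation at the summand $U_0$, and the mutation machinery of \cite{AIR} provides an exchange sequence, which is either $0\to U_0\to U'\to X\to 0$ or $0\to X\to U''\to U_0\to 0$ with $U',U''\in\add U$. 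The second would make $U_0$ a quotient of $U''\in\add U$, i.e.\ $U_0\in\Gen U$, contradicting the observation; so the first holds, and then $X$ is a quotient of $U'\in\add U$, giving $X\in\Gen U$, which is~(ii). Combined with the previous paragraph, this shows that exactly one of~(i) and~(ii) holds.

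The genuinely delicate point is this last paragraph, where recognizing $T_2$ as a single mutation of $T_1$ and extracting the exchange sequence uses results from \cite{AIR} beyond the preliminaries recalled above. If one prefers to argue only with the tools here, the alternative is to choose a minimal right $\add U$-approximation $g\colon U^n\to X$: if $g$ is epic then $X\in\Gen U$ at once, and otherwise one studies $\coker g$ using $\Gen X\subseteq\mathcal T$, the Ext-projectivity of $U$ in $\mathcal T$, the equality $\Hom_C(U,\tau_C X)=0$ (valid since $X\oplus U$ is $\tau_C$-rigid), and Proposition~\ref{GenM Result}, in order to force $X$ to be Ext-projective in $\mathcal T$ --- hence $X\cong U_0$ by the reduction, contradicting $X\not\cong U_0$. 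Turning this torsion-theoretic bookkeeping into a clean argument is the real work.
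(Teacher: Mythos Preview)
The paper does not prove this proposition at all; it is quoted verbatim from \cite[Proposition~2.22]{AIR} as a preliminary fact and no argument is given. So there is no ``paper's own proof'' to compare your attempt against.

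As a standalone assessment: your reduction of condition~(i) to $X\cong U_0$ (the Bongartz complement of $U$) and the implication (i)$\Rightarrow\neg$(ii) are correct and use only the preliminaries the paper records. For $\neg$(i)$\Rightarrow$(ii) you are right that the substance lies in the mutation theory of \cite{AIR}; the exchange-sequence argument you outline is essentially how \cite{AIR} itself establishes the result (their Proposition~2.22 sits on top of the left/right-mutation machinery of Theorem~2.18 and Definition--Proposition~2.28), so invoking it here is circular if the goal is an independent proof, but perfectly appropriate as an explanation of why the cited statement holds. Your alternative sketch via a minimal right $\add U$-approximation is the right idea but, as you acknowledge, is not completed: the step ``force $X$ to be Ext-projective in $\mathcal T$ when $g$ is not epic'' needs a genuine argument (in \cite{AIR} this is exactly where the exchange-triangle/Wakamatsu-type lemma enters), and without it the self-contained proof is not finished.
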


We end this section with a theorem which provides a necessary and sufficient condition for an induced module to be $\tau$-rigid.  We note that a $\emph{partial tilting module}$ is a $\tau$-rigid module such that the projective dimension is less than or equal to one and a $\emph{tilting module}$ is a $\tau$-tilting module with the projective dimension less than or equal to one.
\begin{theorem}$\emph{\cite[Theorem~A]{AM}}$.
\label{AM, Main Result}
Let $B$ be a split extension of $C$ by the nilpotent bimodule $E$, and $T$ be a $C$-module.  Then $T\otimes_CB$ is a (partial) tilting $B$-module if and only if $T$ is a (partial) tilting $C$-module, $\emph{Hom}_C(T\otimes_CE,\tau_CT)=0$, and $\emph{Hom}_C(D(E),\tau_CT)=0$.
\end{theorem}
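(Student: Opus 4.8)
The plan is to separate the two defining properties of a (partial) tilting module—$\tau_B$-rigidity and projective dimension at most one—and to treat the rigidity condition and the projective dimension condition independently, deriving the tilting case from the partial tilting case by controlling the number of indecomposable summands.

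First I would dispose of rigidity. By Lemma \ref{AM1} and the adjunction of Lemma \ref{Adjunct},
\[
\Hom_B(T\otimes_C B,\tau_B(T\otimes_C B))\cong\Hom_C((T\otimes_C B)_C,\tau_C T),
\]
and by Proposition \ref{SS, Proposition 3.6}(a) the restriction $(T\otimes_C B)_C$ is isomorphic to $T\oplus(T\otimes_C E)$. Hence this group splits as $\Hom_C(T,\tau_C T)\oplus\Hom_C(T\otimes_C E,\tau_C T)$, so $T\otimes_C B$ is $\tau_B$-rigid if and only if $T$ is $\tau_C$-rigid and $\Hom_C(T\otimes_C E,\tau_C T)=0$. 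This is an equivalence, so it serves both directions and already accounts for the first Hom-condition; notably, it does not involve $D(E)$.

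Next, the projective dimension, which is where $\Hom_C(D(E),\tau_C T)=0$ enters. By Proposition \ref{SS, Proposition 3.4}(a) the functor $-\otimes_C B$ sends projective $C$-modules to projective $B$-modules, so applying it to a minimal projective resolution of $T$ yields a complex of projective $B$-modules resolving $T\otimes_C B$ up to the defect $\operatorname{Tor}^C_i(T,B)$. Since $B\cong C\oplus E$ as a left $C$-module, $\operatorname{Tor}^C_i(T,B)\cong\operatorname{Tor}^C_i(T,E)$ for $i\ge 1$. When $\pd_C T\le 1$ the lifted complex has length one, so $\pd_B(T\otimes_C B)\le 1$ as soon as $\operatorname{Tor}^C_1(T,E)=0$. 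To translate the latter into the stated Hom-condition I would use the duality $D\operatorname{Tor}^C_1(T,E)\cong\Ext^1_C(T,D(E))$, so that $\operatorname{Tor}^C_1(T,E)=0$ is equivalent to $\Ext^1_C(T,D(E))=0$. Finally, since $\pd_C T\le 1$ makes $\Ext^1_C(T,-)$ right exact, $\Ext^1_C(T,D(E))=0$ forces $\Ext^1_C(T,X)=0$ for every $X\in\Gen(D(E))$, which by Proposition \ref{GenM Result} is exactly $\Hom_C(D(E),\tau_C T)=0$. This right-exactness step is the key device that lets me pass from the single module $D(E)$ to all of $\Gen(D(E))$, and hence to the full $\Hom$-vanishing rather than a merely stable version.

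The hard part will be the forward implication for projective dimension: from $\pd_B(T\otimes_C B)\le 1$ I must recover both $\pd_C T\le 1$ and $\operatorname{Tor}^C_1(T,B)=0$, and there I cannot assume the lifted complex is already a resolution. I would attack this by analysing that complex directly—the higher homology $\operatorname{Tor}^C_i(T,B)$ must be absorbed into the length-one resolution forced by the hypothesis—using that $-\otimes_C B$ preserves projectives, that the restriction functor is exact, and the submodule constraint of Lemma \ref{AR submodule}. Once the partial tilting equivalence is in hand, the tilting case follows by a summand count: the number of simple modules is preserved by Proposition \ref{SS, Proposition 3.4}, and if $T$ is tilting over $C$ one applies $-\otimes_C B$ to a short exact sequence $0\to C\to T^0\to T^1\to 0$ with $T^i\in\add T$; the vanishing $\operatorname{Tor}^C_1(T^1,B)=0$ established above keeps the sequence exact, producing $0\to B\to T^0\otimes_C B\to T^1\otimes_C B\to 0$ and exhibiting $T\otimes_C B$ as a tilting $B$-module, with the converse following from the equality $|T\otimes_C B|=|T|$ for induced $\tau$-rigid modules together with Proposition \ref{summands}.
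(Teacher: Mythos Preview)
This theorem is not proved in the paper; it is quoted from Assem--Marmaridis \cite{AM} and stated without argument. The paper's only gloss is the sentence following the statement, which records that $\Hom_C(D(E),\tau_CT)=0$ is the condition governing the projective dimension of $T\otimes_CB$, while $\Hom_C(T\otimes_CE,\tau_CT)=0$ is the condition governing $\tau_B$-rigidity. Your proposal is organised along exactly this split, and your treatment of the rigidity half via Lemma~\ref{AM1}, Lemma~\ref{Adjunct} and Proposition~\ref{SS, Proposition 3.6} is precisely the computation the paper itself performs (inside the proof of Theorem~\ref{main}) when it needs that half of the result.

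For the projective-dimension half, including the forward implication you flag as ``the hard part'', there is nothing in the present paper to compare against; the argument lives entirely in \cite{AM}. Your outline for the backward direction (lift a length-one projective resolution along $-\otimes_CB$, identify the obstruction as $\operatorname{Tor}^C_1(T,E)$, dualise to $\Ext^1_C(T,D(E))$, and use $\pd_CT\le1$ to pass between $\Ext^1_C(T,D(E))=0$ and $\Hom_C(D(E),\tau_CT)=0$) is sound, but the forward direction remains a genuine gap in what you have written: from $\pd_B(T\otimes_CB)\le1$ you have not actually extracted either $\pd_CT\le1$ or $\operatorname{Tor}^C_1(T,E)=0$, and neither Lemma~\ref{AR submodule} nor the other tools you cite obviously closes it. To assess that step you would need to consult the original proof in \cite{AM}.
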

In $\cite{AM}$, it is shown $\text{Hom}_C(D(E),\tau_CT)=0$ guarantees $T\otimes_CB$ will have the correct projective dimension.  For our purposes, we only require $\text{Hom}_C(T\otimes_CE,\tau_CT)=0$ which guarantees $T\otimes_CB$ will be $\tau_B$-rigid.

\section{Main Results and Corollaries}
We begin with our first main result.  Throughout, we assume $B$ is a split extension of $C$ by a nilpotent bimodule $E$.  
\begin{theorem}
\label{main}
Let M be a $\tau_C$-rigid module with $U$ its Bongartz $\tau$-complement in $\mathop{\text{mod}}C$.  Suppose $M\otimes_CB$ is $\tau_B$-rigid.  Then $U\otimes_CB$ is the Bongartz $\tau$-complement in $\mathop{\emph{mod}}B$ if and only if $\emph{Hom}_C(U\otimes_CE,\tau_CM)=0$.
\end{theorem}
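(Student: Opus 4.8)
The plan is to characterize when $U \otimes_C B$ is the Bongartz $\tau$-complement of $M \otimes_C B$ by comparing torsion classes. First I would recall that, by hypothesis, $M \otimes_C B$ is $\tau_B$-rigid, so by Lemma~\ref{AIR, Lemma 2.11} we get a sincere functorially finite torsion class $^{\perp}(\tau_B(M \otimes_C B))$, and its Ext-projective module $P(^{\perp}(\tau_B(M \otimes_C B)))$ is the $\tau_B$-tilting module whose summands not in $\add(M \otimes_C B)$ constitute the Bongartz $\tau$-complement. So the claim amounts to showing that $U \otimes_C B$ consists precisely of those Ext-projectives. The natural strategy is to show $(M \oplus U) \otimes_C B = (M \otimes_C B) \oplus (U \otimes_C B)$ is $\tau_B$-rigid with $|C| = |B|$ summands (hence $\tau_B$-tilting by Proposition~\ref{summands} and the count of simples from Proposition~\ref{SS, Proposition 3.4}), and then identify it as $P(^{\perp}(\tau_B(M \otimes_C B)))$ via Proposition~\ref{Ext1}, i.e.\ verify $\Gen((M\oplus U)\otimes_CB) \subseteq~^{\perp}(\tau_B(M\otimes_CB)) \subseteq~^{\perp}(\tau_B((M\oplus U)\otimes_CB))$. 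The condition $\Hom_C(U \otimes_C E, \tau_C M) = 0$ should be exactly what is needed to push the $\tau_C$-rigidity of $M \oplus U$ through to $\tau_B$-rigidity of the induced module.

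For the forward direction, I would work through the following steps. Since $M \oplus U$ is $\tau_C$-tilting, it is in particular $\tau_C$-rigid, so $\Hom_C(M \oplus U, \tau_C(M \oplus U)) = 0$. Using Proposition~\ref{SS, Proposition 3.6}(a), as a $C$-module $(M \oplus U) \otimes_C B \cong (M \oplus U) \oplus ((M \oplus U) \otimes_C E)$. To show $(M\oplus U)\otimes_C B$ is $\tau_B$-rigid, I would use Lemma~\ref{AM1} and the adjunction of Lemma~\ref{Adjunct}: $\Hom_B((M\oplus U)\otimes_C B, \tau_B((M\oplus U)\otimes_C B)) \cong \Hom_C(M\oplus U, \Hom_B({}_CB_B, \tau_B((M\oplus U)\otimes_C B)))$, and since $\Hom_B({}_CB_B,-)$ is the forgetful functor, combined with Lemma~\ref{AR submodule} ($\tau_B((M\oplus U)\otimes_C B)$ is a submodule of $\tau_B(M\oplus U)$... wait, more precisely a submodule of the $\tau_B$ of the underlying, so I must be careful here). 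The cleaner route: $M\otimes_C B$ is $\tau_B$-rigid by hypothesis; $U\otimes_C B$ is $\tau_B$-rigid because $U$ is $\tau_C$-rigid and (by Theorem~\ref{AM, Main Result}'s proof ingredient, or directly) the hypothesis $\Hom_C(U\otimes_C E, \tau_C M)=0$ together with $U$'s own $\tau_C$-rigidity gives $\Hom_C(U\otimes_C E, \tau_C(M\oplus U))=0$ once one also knows $\Hom_C(U\otimes_C E,\tau_C U)=0$, which I would derive from $\Gen(U\otimes_C E)\subseteq \Gen(U\otimes_C B)$ and... this is the delicate point. I would then show the cross terms $\Hom_B(M\otimes_C B, \tau_B(U\otimes_C B))$ and $\Hom_B(U\otimes_C B, \tau_B(M\otimes_C B))$ vanish, the latter being $\cong \Hom_C(U\otimes_C B \otimes_B B_C, \tau_C M) = \Hom_C(U \oplus (U\otimes_C E), \tau_C M)$, which vanishes iff $\Hom_C(U, \tau_C M) = 0$ (true since $M\oplus U$ is $\tau_C$-rigid) and $\Hom_C(U\otimes_C E, \tau_C M) = 0$ (the hypothesis). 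That pins down the equivalence for that cross term, and a symmetric argument handles the other. Finally, counting: $|(M\oplus U)\otimes_C B| \leq |B| = |C| = |M \oplus U|$, and a surjectivity/faithfulness argument on summands gives equality, so it is $\tau_B$-tilting; then Proposition~\ref{Ext1} plus $M\otimes_C B \in \add P(^{\perp}(\tau_B(M\otimes_C B)))$ identifies $U\otimes_C B$ as the complement.

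For the converse, I would assume $U \otimes_C B$ is the Bongartz $\tau$-complement and extract $\Hom_C(U\otimes_C E, \tau_C M) = 0$. If $U\otimes_C B$ is the Bongartz complement, then $(M\oplus U)\otimes_C B$ is $\tau_B$-tilting, hence $\tau_B$-rigid; in particular $\Hom_B(U\otimes_C B, \tau_B(M\otimes_C B)) = 0$. By the adjunction computation above, this $\Hom$ is $\cong \Hom_C(U \oplus (U\otimes_C E), \tau_C M)$, whose vanishing forces in particular $\Hom_C(U\otimes_C E, \tau_C M) = 0$. This direction should be short once the adjunction identity $\Hom_B(U\otimes_C B, \tau_B(M\otimes_C B)) \cong \Hom_C((U\otimes_C B)_C, \tau_C M)$ is in hand.

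The main obstacle I anticipate is the passage from $\tau_C$-rigidity of $M \oplus U$ to $\tau_B$-rigidity of $(M\oplus U)\otimes_C B$ — specifically controlling the self-extension term $\Hom_C(U\otimes_C E, \tau_C U)$ and the interaction terms, since Lemma~\ref{AR submodule} only gives $\tau_B$ of an induced module as a submodule of $\tau_B$ of the module, not directly of $\tau_C$. I expect the resolution is to use Proposition~\ref{GenM Result} to rephrase everything in terms of $\Ext^1$ into $\Gen$-classes: $\Hom_C(U\otimes_C E, \tau_C U) = 0$ should follow from $\Ext^1_C(U, \Gen(U\otimes_C E)) = 0$, and $\Gen(U\otimes_C E) \subseteq \Gen(U\otimes_C B) \subseteq \Gen U$ (using that $U\otimes_C E$ is a quotient of $U\otimes_C B$, which surjects onto... hmm, actually $U\otimes_C E$ is a submodule of $U\otimes_C B$ by Proposition~\ref{SS, Proposition 3.6}(a), so I would instead observe $U\otimes_C E$ is generated by $U$ as it is a quotient of a sum of copies of $U$ via $U\otimes_C C \to U$ tensored appropriately), combined with the $\tau_C$-rigidity of $U$ giving $\Ext^1_C(U,\Gen U) = 0$. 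Getting these generation statements exactly right, and confirming the hypothesis $\Hom_C(U\otimes_C E,\tau_C M)=0$ is genuinely the only missing ingredient (rather than also needing a $\Hom_C(D(E),-)$ condition as in Theorem~\ref{AM, Main Result}), is where the real care is required.
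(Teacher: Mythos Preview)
Your overall plan and the easy direction are correct and match the paper: if $U\otimes_C B$ is the Bongartz complement then $\Hom_B(U\otimes_C B,\tau_B(M\otimes_C B))=0$, and the adjunction chain of Lemma~\ref{Adjunct} together with $(U\otimes_C B)_C \cong U\oplus(U\otimes_C E)$ yields $\Hom_C(U\otimes_C E,\tau_C M)=0$. (Your ``forward'' and ``converse'' labels are swapped relative to the statement, but that is cosmetic.)

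In the substantive direction you correctly isolate two tasks: (i) showing $\Hom_C(U\otimes_C E,\tau_C U)=0$ so that $U\otimes_C B$ is $\tau_B$-rigid, and (ii) verifying the containment ${}^\perp(\tau_B(M\otimes_C B))\subseteq{}^\perp(\tau_B(U\otimes_C B))$ so that $U\otimes_C B$ is Ext-projective in the correct torsion class. Your $\Gen$-based argument for (i) can be completed (indeed $U\otimes_C E\in\Gen U$ as $C$-modules, since $E$ is a quotient of some free left $C$-module), but you give no mechanism for (ii), and the $\Gen$ reformulation does not help there: from $\Ext^1_C(M,\Gen X_C)=0$ one cannot infer $\Ext^1_C(U,\Gen X_C)=0$.

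The single observation you are missing, and which the paper uses to settle both (i) and (ii) at once, is that $\tau_C U\in\mathrm{Cogen}(\tau_C M)$. This is immediate from the construction of $U$: it is Ext-projective in the torsion class ${}^\perp(\tau_C M)$, whose torsionfree partner is $\mathrm{Cogen}(\tau_C M)$ by Lemma~\ref{AIR, Lemma 2.11}, so Proposition~\ref{Ext2} places $\tau_C U$ there. With $\tau_C U\hookrightarrow(\tau_C M)^d$ in hand, (i) follows from the hypothesis $\Hom_C(U\otimes_C E,\tau_C M)=0$ in one line, and (ii) follows because $X\in{}^\perp(\tau_B(M\otimes_C B))$ gives $\Hom_C(X_C,\tau_C M)=0$ via adjunction, hence $\Hom_C(X_C,\tau_C U)=0$, hence $X\in{}^\perp(\tau_B(U\otimes_C B))$ by adjunction again. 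The counting argument to finish is exactly as you outline, and your suspicion about the $\Hom_C(D(E),-)$ condition is correct: it is not needed here, as it only governs projective dimension in Theorem~\ref{AM, Main Result}.
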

\begin{proof}
Suppose $U\otimes_CB$ is the Bongartz $\tau$-complement of $M\otimes_CB$ in $\mathop{\text{mod}}B$.  This implies $\text{Hom}_B(U\otimes_CB,\tau_B(M\otimes_CB))=0$.  Using lemma $\ref{Adjunct}$ and proposition $\ref{SS, Proposition 3.6}$, we have the following isomorphisms 
\[
\text{Hom}_B(U\otimes_CB,\tau_B(M\otimes_CB))\cong\text{Hom}_B(U\otimes_CB,\text{Hom}_C(_BB_C,\tau_CM))\cong
\]
\[
\text{Hom}_C(U\otimes_CB\otimes_BB_C,\tau_CM)\cong\text{Hom}_C(U\otimes_CB_C,\tau_CM)\cong
\]
\[
\text{Hom}_C(U\otimes_C(C\oplus E)_C,\tau_CM)\cong\text{Hom}_C(U\oplus (U\otimes_CE),\tau_CM)\cong
\]
\[
\text{Hom}_C(U,\tau_CM)\oplus\text{Hom}_C(U\otimes_CE,\tau_CM).
\]

We conclude that $\text{Hom}_C(U\otimes_CE,\tau_CM)=0$.
\par
Conversely, suppose $\text{Hom}_C(U\otimes_CE,\tau_CM)=0$.  Then $\text{Hom}_C(U\otimes_CE,\tau_CU)=0$ because $U$ is Ext-projective in $^{\perp}(\tau_CM)$ and proposition $\ref{Ext2}$ shows $\tau_CU$ is cogenerated by $\tau_CM$ since $\text{Cogen}(\tau_CM)$ is the corresponding torsionfree class by lemma $\ref{AIR, Lemma 2.11}$.  Thus, theorem $\ref{AM, Main Result}$ says $U\otimes_CB$ is $\tau_B$-rigid.  Using the above vector space isomorphisms, we see $\text{Hom}_B(U\otimes_CB,\tau_B(M\otimes_CB))=0$.  Next, we will show $U\otimes_CB$ is Ext-projective in $^\perp(\tau_B(M\otimes_CB))$.  By proposition $\ref{Ext1}$, we need to show that 
\[
\mathop{\text{Gen}}(U\otimes_CB)\subseteq~^\perp(\tau_B(M\otimes_CB))\subseteq~^\perp(\tau_B(U\otimes_CB)).
\]
The first containment is clear so let $X\in~^\perp(\tau_B(M\otimes_CB))$ but $X\not\in~^\perp(\tau_B(U\otimes_CB))$.  Using the above vector space isomorphisms, $\text{Hom}_C(X_C,\tau_CM)=0$ and $\text{Hom}_C(X_C,\tau_CU)\not=0$ where $X_C$ denotes the $C$-module structure of $X$.  Since proposition $\ref{Ext2}$ says $\tau_CU$ is cogenerated by $\tau_CM$, we have a contradiction.  Thus, $U\otimes_CB$ is Ext-projective in $^\perp(\tau_B(M\otimes_CB))$.  
\par
Lastly, we need to show $U\otimes_CB$ comprises all the indecomposable Ext-projective modules in $^{\perp}(\tau_B(M\otimes_CB))$ up to isomorphism not in add($M\otimes_CB$).  Suppose not and let $Y$ be the direct sum of all remaining Ext-projective modules in $^{\perp}(\tau_B(M\otimes_CB))$ up to isomorphism not in add$(M\otimes_CB)$.  Then $(U\otimes_CB)\oplus Y$ is the Bongartz $\tau$-complement of $M\otimes_CB$ in $\mathop{\text{mod}}B$.  Thus, $(M\otimes_CB)\oplus(U\otimes_CB)\oplus Y$ is a $\tau_B$-tilting module such that the number of pairwise nonisomorphic  indecomposable summands equals the number of pairwise nonisomorphic simple modules of $B$.  However, proposition $\ref{SS, Proposition 3.4}$ implies the number of pairwise nonisomorphic simple modules of $C$ and $B$ are equal.  Thus, we have the inequality $|(M\otimes_CB)\oplus(U\otimes_CB)\oplus Y|>|B|$ but this contradicts proposition $\ref{summands}$.  We conclude $Y$ must be $0$ and $U\otimes_CB$ is the Bongartz $\tau$-complement of $M\otimes_CB$ in $\mathop{\text{mod}}B$.
\end{proof}
Next, we present three corollaries.  Let $M$ be a $\tau_C$-rigid module with $U$ is Bongartz $\tau$-complement in $\mathop{\text{mod}}C$.  In the case $M\in\text{Gen}U$, we may drop the assumption that $M\otimes_CB$ be $\tau_B$-rigid.
\begin{cor}
\label{cor 1}
Suppose $M\in\emph{Gen}U$.  Then $M\otimes_CB$ is $\tau_B$-rigid with $U\otimes_CB$ its Bongartz $\tau$-complement in $\mathop{\text{mod}}B$ if and only if $\emph{Hom}_C(U\otimes_CE,\tau_CM)=0$.
\end{cor}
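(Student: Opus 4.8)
The plan is to bootstrap from Theorem~\ref{main}: the present statement differs from it only in that we no longer assume $M\otimes_CB$ is $\tau_B$-rigid, so the whole task is to recover that $\tau_B$-rigidity from the hypothesis $M\in\mathop{\text{Gen}}U$ together with $\text{Hom}_C(U\otimes_CE,\tau_CM)=0$, after which Theorem~\ref{main} closes the argument verbatim.

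For the nontrivial implication, assume $\text{Hom}_C(U\otimes_CE,\tau_CM)=0$. Choose an epimorphism $U^{d}\rightarrow M$ witnessing $M\in\mathop{\text{Gen}}U$ and apply the right exact functor $-\otimes_CE$ to obtain an epimorphism $(U\otimes_CE)^{d}\rightarrow M\otimes_CE$ of $C$-modules; equivalently, $M\otimes_CE\in\mathop{\text{Gen}}(U\otimes_CE)\subseteq~^{\perp}(\tau_CM)$, using that $^{\perp}(\tau_CM)$ is a torsion class by Lemma~\ref{AIR, Lemma 2.11}. Hence $\text{Hom}_C(M\otimes_CE,\tau_CM)=0$, and since $M$ is $\tau_C$-rigid this is precisely the condition (Theorem~\ref{AM, Main Result}, in the form of the remark following it) guaranteeing that $M\otimes_CB$ is $\tau_B$-rigid. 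With $\tau_B$-rigidity in hand, Theorem~\ref{main} applies and, because $\text{Hom}_C(U\otimes_CE,\tau_CM)=0$, yields that $U\otimes_CB$ is the Bongartz $\tau$-complement of $M\otimes_CB$ in $\mathop{\text{mod}}B$.

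The converse is immediate: if $M\otimes_CB$ is $\tau_B$-rigid with $U\otimes_CB$ its Bongartz $\tau$-complement, then the hypotheses of Theorem~\ref{main} are met, and that theorem forces $\text{Hom}_C(U\otimes_CE,\tau_CM)=0$ (note $M\in\mathop{\text{Gen}}U$ is not used here). The only point demanding attention is the transport of the vanishing $\text{Hom}_C(U\otimes_CE,\tau_CM)=0$ along $-\otimes_CE$ to the vanishing $\text{Hom}_C(M\otimes_CE,\tau_CM)=0$; I expect this to be the crux, but it reduces to the observation that $-\otimes_CE$ carries the generating epimorphism $U^{d}\rightarrow M$ to a generating epimorphism $(U\otimes_CE)^{d}\rightarrow M\otimes_CE$, combined with closure of the torsion class $^{\perp}(\tau_CM)$ under quotients (or, equivalently, left exactness of $\text{Hom}_C(-,\tau_CM)$).
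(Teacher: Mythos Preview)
Your proposal is correct and follows essentially the same approach as the paper: reduce to Theorem~\ref{main} by showing that $M\in\mathop{\text{Gen}}U$ together with $\text{Hom}_C(U\otimes_CE,\tau_CM)=0$ forces $\text{Hom}_C(M\otimes_CE,\tau_CM)=0$ via the epimorphism $(U\otimes_CE)^{d}\rightarrow M\otimes_CE$ obtained by applying the right exact functor $-\otimes_CE$, and then invoke Theorem~\ref{AM, Main Result}. The only cosmetic difference is that you phrase the key step as $M\otimes_CE\in{}^{\perp}(\tau_CM)$ via torsion-class closure, whereas the paper appeals directly to left exactness of $\text{Hom}_C(-,\tau_CM)$; these are the same observation.
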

\begin{proof}
We only need to show $M\otimes_CB$ being $\tau_B$-rigid follows from the assumption $\text{Hom}_C(U\otimes_CE,\tau_CM)=0$.  The rest follows from theorem $\ref{main}$.  Since $M\in\text{Gen}U$, there exists an epimorphism $f:U^d\rightarrow M$ where $d\geq 0$.  The functor $\_\otimes_CE$ is right exact and applying to $f$ yields an epimorphism $f\otimes_C1_E:(U\otimes_CE)^d\rightarrow M\otimes_CE.$  Thus, $\text{Hom}_C(U\otimes_CE,\tau_CM)=0$ implies $\text{Hom}_C(M\otimes_CE,\tau_CM)=0$ which further implies $M\otimes_CB$ is $\tau_B$-rigid by theorem $\ref{AM, Main Result}$.  
\end{proof}
In the special case where $M$ is indecomposable and non-projective, we always have $M\in\text{Gen}U$.
\begin{cor}
Let $M$ be indecomposable and non-projective.  Then $M\otimes_CB$ is $\tau_B$-rigid with $U\otimes_CB$ its Bongartz $\tau$-complement in $\mathop{\emph{mod}}B$ if and only if $\emph{Hom}_C(U\otimes_CE,\tau_CM)=0$.
\end{cor}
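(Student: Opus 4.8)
My plan is to derive the statement from Corollary \ref{cor 1}. That corollary already gives the asserted equivalence under the additional hypothesis $M\in\mathop{\text{Gen}}U$; hence it suffices to prove that an indecomposable non-projective $\tau_C$-rigid module $M$, with Bongartz $\tau$-complement $U$ in $\mathop{\text{mod}}C$, always satisfies $M\in\mathop{\text{Gen}}U$. Granting this, both implications follow verbatim from Corollary \ref{cor 1} (and, incidentally, the $\tau_B$-rigidity of $M\otimes_CB$ is then also automatic from Corollary \ref{cor 1}).

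To show $M\in\mathop{\text{Gen}}U$ I would argue by contradiction, applying Proposition \ref{ind} to the basic $\tau_C$-tilting module $M\oplus U$ with its indecomposable summand $M$: exactly one of $^{\perp}(\tau_C U)\subseteq{}^{\perp}(\tau_C M)$ and $M\in\mathop{\text{Gen}}U$ holds, so it is enough to exclude the inclusion. Here the Bongartz property of $U$ is essential. Since $M\oplus U=P(^{\perp}(\tau_C M))$, the module $U$ is $\mathop{\text{Ext}}$-projective in the torsion class $^{\perp}(\tau_C M)$, so Proposition \ref{Ext1} yields $\mathop{\text{Gen}}U\subseteq{}^{\perp}(\tau_C M)\subseteq{}^{\perp}(\tau_C U)$, which is the opposite inclusion to the one we wish to rule out. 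Consequently, if $^{\perp}(\tau_C U)\subseteq{}^{\perp}(\tau_C M)$ also held, the two torsion classes would coincide; then $\mathop{\text{Gen}}(M\oplus U)={}^{\perp}(\tau_C M)={}^{\perp}(\tau_C U)$ by Theorem \ref{AIR, 2.12}, and comparing with the Bongartz construction for the almost complete $\tau_C$-tilting module $U$ (so $P(^{\perp}(\tau_C U))=U\oplus(\text{its Bongartz complement})$) while counting indecomposable summands, using that $M$ is indecomposable with $M\notin\mathop{\text{add}}U$, we would conclude that $M$ is itself the Bongartz $\tau$-complement of $U$; in particular $M\oplus U$ would be at once the Bongartz completion of $M$ and of $U$.

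The heart of the proof is then to contradict this last situation using that $M$ is non-projective, i.e. $\tau_C M\neq 0$. Concretely, I would consider the projective cover $p\colon P_M\twoheadrightarrow M$ and aim to prove $\mathop{\text{Hom}}_C(P_M,\tau_C M)=0$: from this $P_M$ is a projective, hence $\mathop{\text{Ext}}$-projective, object of $^{\perp}(\tau_C M)$, hence a direct summand of $P(^{\perp}(\tau_C M))=M\oplus U$, and --- $M$ being indecomposable and non-projective --- actually of $U$; then $p$ factors $M$ through $\mathop{\text{add}}U$, so $M\in\mathop{\text{Gen}}U$, contradicting our assumption. The one point still to be shown is thus the vanishing $\mathop{\text{Hom}}_C(P_M,\tau_C M)=0$, equivalently that no simple direct summand of $\mathop{\text{top}}M$ occurs as a composition factor of $\tau_C M$. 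I would attack this via the Auslander--Reiten sequence $0\to\tau_C M\to E\to M\to 0$ of the non-projective module $M$: a nonzero map $P_M\to\tau_C M$ cannot kill $\ker p=\Omega M$ since $\mathop{\text{Hom}}_C(M,\tau_C M)=0$, and I would play the resulting nonzero map $\Omega M\to\tau_C M$ against $\mathop{\text{Hom}}_C(M\oplus U,\tau_C M)=0$ together with the torsion-class identity $\mathop{\text{Gen}}(M\oplus U)={}^{\perp}(\tau_C M)={}^{\perp}(\tau_C U)$ forced in the previous step. Controlling how $\Omega M$ sits relative to the torsion class $^{\perp}(\tau_C M)$ is the step I expect to be the main obstacle; alternatively it can be shortcut by quoting the description of the two completions of an almost complete $\tau$-tilting module from the $\tau$-tilting mutation machinery of \cite{AIR}.
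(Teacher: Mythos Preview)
Your reduction to Corollary~\ref{cor 1} by proving $M\in\Gen U$, and your use of Proposition~\ref{ind} together with Proposition~\ref{Ext1} to force $^{\perp}(\tau_CU)={}^{\perp}(\tau_CM)$ under the contradiction hypothesis, match the paper's argument exactly. The divergence, and the gap, is in how you finish.

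You propose to reach the contradiction by proving $\Hom_C(P_M,\tau_CM)=0$, so that $P_M\in\add U$ and hence $M\in\Gen U$. But you do not establish this vanishing: your attack via the Auslander--Reiten sequence and $\Omega M$ is, by your own admission, left open. There is in fact no a priori reason to expect it. Since $\tau_CM\neq 0$, some indecomposable projective maps nontrivially to $\tau_CM$, and nothing in the equality $^{\perp}(\tau_CU)={}^{\perp}(\tau_CM)$ prevents that projective from being $P_M$; the torsion class $^{\perp}(\tau_CM)$ need not contain $P_M$ at all. So this route needs a genuinely new idea (or the outside mutation input from \cite{AIR} you allude to), and as written the proof is incomplete.

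The paper closes the argument instead by passing to the torsionfree side, which you never invoke. Since $U$ is Ext-projective in $^{\perp}(\tau_CM)$, Proposition~\ref{Ext2} together with Lemma~\ref{AIR, Lemma 2.11} gives $\tau_CU\in\text{Cogen}(\tau_CM)$. Symmetrically, once $^{\perp}(\tau_CU)={}^{\perp}(\tau_CM)$, Proposition~\ref{Ext1} shows $M$ is Ext-projective in $^{\perp}(\tau_CU)$, so Proposition~\ref{Ext2} yields $\tau_CM\in\text{Cogen}(\tau_CU)$. From these mutual cogeneration statements the paper concludes $\tau_CM\cong\tau_CU$ and that both must then be zero, contradicting the non-projectivity of $M$. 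The missing ingredient in your argument is precisely this use of Proposition~\ref{Ext2} and the torsionfree class $\text{Cogen}(\tau_CM)$.
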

\begin{proof}
We need to show $M\in\text{Gen}U$ and the result will follow from corollary $\ref{cor 1}$.  By proposition $\ref{ind}$ either $M\in\text{Gen}U$ or $^{\perp}(\tau_CU)\subseteq~\!^{\perp}(\tau_CM)$.  Assume
$^{\perp}(\tau_CU)\subseteq~\!^{\perp}(\tau_CM)$ is true.  Since $U$ is the Bongartz $\tau$-complement in $\mathop{\text{mod}}C$, we have $^{\perp}(\tau_CM)\subseteq~\!^{\perp}(\tau_CU)$ by proposition $\ref{Ext1}$.  Thus, $^{\perp}(\tau_CU)=~\!^{\perp}(\tau_CM)$.  Again, since $U$ is the Bongartz $\tau$-complement of $M$, we know $\tau_CU\in\text{Cogen}(\tau_CM)$.  Now, $\text{Gen}M\subseteq~\!^{\perp}(\tau_CM)=~\!^{\perp}(\tau_CU)$ and proposition $\ref{Ext1}$ implies $M$ is Ext-projective in $^{\perp}(\tau_CU)$.  Proposition $\ref{Ext2}$ gives $\tau_CM\in\text{Cogen}(\tau_CU)$.  Since $\tau_CU$ and $\tau_CM$ cogenerate each other, we conclude $\tau_CM\cong\tau_CU$.  This is only possible if both $\tau_CM$ and $\tau_CU$ are 0 which implies $M$ and $U$ are projective.  But we assumed $M$ is not projective and thus a contradiction.  We conclude $M\in\text{Gen}U$.   
\end{proof}
Next, we assume that $E\in\text{Gen}M$ when $E$ is viewed as a right $C$-module.
\begin{cor}
Let $E\in\emph{Gen}M$.  Then $M\otimes_CB$ is $\tau_B$-rigid with $U\otimes_CB$ its Bongartz $\tau_B$-complement.
\end{cor}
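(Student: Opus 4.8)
The plan is to verify the two inputs needed for Theorem \ref{main} — that $M\otimes_C B$ is $\tau_B$-rigid, and that $\Hom_C(U\otimes_C E,\tau_C M)=0$ — and then quote that theorem. Both of these will come from a single observation about how $-\otimes_C E$ acts on objects.

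The observation is: for every right $C$-module $X$ one has $X\otimes_C E\in\Gen(E_C)$, where $E_C$ is $E$ equipped with its right $C$-module structure. Indeed, the canonical $k$-linear epimorphism $X\otimes_k E\to X\otimes_C E$, $x\otimes_k e\mapsto x\otimes_C e$, is a morphism of right $C$-modules once $X\otimes_k E$ is given the right $C$-action coming from the right action on the $E$-factor alone; and with that action $X\otimes_k E\cong (E_C)^{\dim_k X}$. The one point that requires care here — and it is really the only delicate step — is the bookkeeping of the two $C$-module structures on $E$: the tensor product $X\otimes_C E$ is formed over the \emph{left} $C$-structure of $E$ (the one induced by $\sigma$), whereas the hypothesis $E\in\Gen M$ concerns the \emph{right} structure $E_C$, and one must check that the displayed surjection is genuinely right $C$-linear, which holds because the left and right actions on the bimodule $E$ commute.

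Granting this, the rest is formal. Since $E_C\in\Gen M$ by hypothesis, $\Gen(E_C)\subseteq\Gen M$, so $X\otimes_C E\in\Gen M$ for every right $C$-module $X$; and since $M$ is $\tau_C$-rigid, any surjection $M^d\to Y$ induces an injection $\Hom_C(Y,\tau_C M)\hookrightarrow\Hom_C(M^d,\tau_C M)=0$, giving $\Gen M\subseteq{}^{\perp}(\tau_C M)$. Taking $X=M$ yields $\Hom_C(M\otimes_C E,\tau_C M)=0$, so $M\otimes_C B$ is $\tau_B$-rigid by Theorem \ref{AM, Main Result}. Taking $X=U$ yields $\Hom_C(U\otimes_C E,\tau_C M)=0$. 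Theorem \ref{main} now applies and shows that $U\otimes_C B$ is the Bongartz $\tau$-complement of $M\otimes_C B$ in $\mathop{\text{mod}}B$, as desired. (Equivalently, one may apply the observation to $T=M\oplus U$: then $T\otimes_C B$ is $\tau_B$-rigid by Theorem \ref{AM, Main Result}, and both facts above are read off the decomposition $\Hom_C(T\otimes_C E,\tau_C T)=\Hom_C(M\otimes_C E\oplus U\otimes_C E,\tau_C M\oplus\tau_C U)=0$; the bookkeeping is the same either way.)
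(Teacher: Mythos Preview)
Your proof is correct and follows the same overall strategy as the paper: verify $\Hom_C(M\otimes_CE,\tau_CM)=0$ and $\Hom_C(U\otimes_CE,\tau_CM)=0$, then invoke Theorem~\ref{main}. The only divergence is in how you obtain these vanishings. You argue that $X\otimes_CE\in\Gen(E_C)\subseteq\Gen M\subseteq{}^{\perp}(\tau_CM)$ for every $X$, using the explicit right $C$-linear surjection $E_C^{\dim_kX}\cong X\otimes_kE\twoheadrightarrow X\otimes_CE$; the paper instead uses the tensor--hom adjunction $\Hom_C(X\otimes_CE,\tau_CM)\cong\Hom_C\bigl(X,\Hom_C(E,\tau_CM)\bigr)$ and notes that the inner $\Hom$ is zero because $E\in\Gen M$. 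Your route is more hands-on and forces you to keep track of the two $C$-actions on $E$ (which you handle correctly); the paper's adjunction argument hides that bookkeeping inside a standard isomorphism. Both extract the same content from the hypothesis $E\in\Gen M$, so the difference is purely one of packaging.
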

\begin{proof}
Since $E\in\text{Gen}M$, we have $\text{Hom}_C(E,\tau_CM)=0$.  Since $\tau_CU$ is cogenerated by $\tau_CM$ by proposition $\ref{Ext2}$, we also have $\text{Hom}_C(E,\tau_CU)=0$.  Using the adjunction isomorphism,
\[
 0=\text{Hom}_C(M,\text{Hom}_C(E,\tau_CM))\cong\text{Hom}_C(M\otimes_CE,\tau_CM).
 \]
By Theorem $\ref{AM, Main Result}$, $M\otimes_CB$ is $\tau_B$-rigid.  By the same reasoning, $\text{Hom}_C(U\otimes_CE,\tau_CM)$ and $\text{Hom}_C(U\otimes_CE,\tau_CU)$ are equal to 0.  The result now follows from Theorem $\ref{main}$.
\end{proof}
Our next proposition concerns almost complete $\tau$-tilting modules.
\begin{prop}
\label{main2}
Suppose $M$ is an almost complete $\tau_C$-titling module such that $M\oplus Y$ is $\tau_C$-tilting and $Y$ is not the Bongartz $\tau_C$-complement for some indecomposable $C$-module $Y$.  Suppose $M\otimes_CB$ is $\tau_B$-tilting.  Then $(M\otimes_CB)\oplus(Y\otimes_CB)$ is $\tau_B$-tilting if and only if $\emph{Hom}_C(M\otimes_CE,\tau_CY)=0$.
\end{prop}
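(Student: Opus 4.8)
The plan is to follow the proof of Theorem \ref{main}: translate the $\tau_B$-rigidity of $(M\otimes_CB)\oplus(Y\otimes_CB)$ into a list of $\text{Hom}_C$-vanishing conditions using Lemma \ref{Adjunct} and Proposition \ref{SS, Proposition 3.6}, and then collapse that list to the single condition $\text{Hom}_C(M\otimes_CE,\tau_CY)=0$ by exploiting that the non-Bongartz complement $Y$ lies in $\text{Gen}M$. Throughout write $T=M\oplus Y$, so that $(M\otimes_CB)\oplus(Y\otimes_CB)=T\otimes_CB$.

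\textbf{Step 1 ($Y\in\text{Gen}M$).} Applying Proposition \ref{ind} to the basic $\tau_C$-tilting module $T=Y\oplus M$ with $Y$ indecomposable, exactly one of ${}^{\perp}(\tau_CM)\subseteq{}^{\perp}(\tau_CY)$ and $Y\in\text{Gen}M$ holds. If the first held, then Theorem \ref{AIR, 2.12} and the definition of ${}^{\perp}(-)$ would give $\text{Gen}T={}^{\perp}(\tau_CT)={}^{\perp}(\tau_CM)\cap{}^{\perp}(\tau_CY)={}^{\perp}(\tau_CM)$, and Proposition \ref{Ext1} (with the functorially finite torsion class ${}^{\perp}(\tau_CM)$ of Lemma \ref{AIR, Lemma 2.11}) would force $T\in\text{add}\,P({}^{\perp}(\tau_CM))$. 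Since $P({}^{\perp}(\tau_CM))\cong M\oplus U$ and $T\cong M\oplus Y$ are both basic $\tau_C$-tilting, comparing their indecomposable summands gives $Y\cong U$, contradicting the hypothesis that $Y$ is not the Bongartz $\tau$-complement. Hence $Y\in\text{Gen}M$, and applying the right exact functor $-\otimes_CE$ to an epimorphism $M^{d}\rightarrow Y$ produces an epimorphism $(M\otimes_CE)^{d}\rightarrow Y\otimes_CE$, i.e. $Y\otimes_CE\in\text{Gen}(M\otimes_CE)$.

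\textbf{Step 2 (reduction to one condition).} The module $T\otimes_CB$ is $\tau_B$-rigid if and only if $\text{Hom}_B(T\otimes_CB,\tau_B(T\otimes_CB))=0$. Expanding $\tau_B(T\otimes_CB)$ via Lemma \ref{AM1}, applying the adjunction of Lemma \ref{Adjunct}, and using $(T\otimes_CB)_C\cong T\oplus(T\otimes_CE)$ from Proposition \ref{SS, Proposition 3.6} --- exactly as in the proof of Theorem \ref{main} --- this space, after discarding the summands that vanish because $M\oplus Y$ is $\tau_C$-rigid, becomes
\[
\text{Hom}_C(M\otimes_CE,\tau_CM)\oplus\text{Hom}_C(M\otimes_CE,\tau_CY)\oplus\text{Hom}_C(Y\otimes_CE,\tau_CM)\oplus\text{Hom}_C(Y\otimes_CE,\tau_CY).
\]
The first summand is $0$ because $M\otimes_CB$ is $\tau_B$-rigid (the same computation with $M$ in place of $T$). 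By Step 1 we have $Y\otimes_CE\in\text{Gen}(M\otimes_CE)$, so --- $\text{Hom}_C(-,\tau_CM)$ and $\text{Hom}_C(-,\tau_CY)$ being left exact --- the third summand embeds in a power of the first and the fourth in a power of the second. Thus the whole space vanishes if and only if $\text{Hom}_C(M\otimes_CE,\tau_CY)=0$; equivalently, $(M\otimes_CB)\oplus(Y\otimes_CB)$ is $\tau_B$-rigid if and only if $\text{Hom}_C(M\otimes_CE,\tau_CY)=0$.

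\textbf{Step 3 (rigid upgrades to tilting) and conclusion.} Suppose $(M\otimes_CB)\oplus(Y\otimes_CB)$ is $\tau_B$-rigid. Its basic version is $\tau_B$-rigid, hence has at most $|B|$ indecomposable summands by Proposition \ref{summands}; but it contains $M\otimes_CB$ as a direct summand, and $M\otimes_CB$, being $\tau_B$-tilting, already has $|B|=|C|$ nonisomorphic indecomposable summands (using Proposition \ref{SS, Proposition 3.4}). Therefore its basic version has exactly $|B|$ summands and, being $\tau_B$-rigid, is $\tau_B$-tilting. Together with Step 2 and the trivial implication $\tau_B$-tilting $\Rightarrow\tau_B$-rigid, this establishes both directions. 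I expect Step 1 to be the only genuinely non-routine part; Steps 2 and 3 are bookkeeping patterned on the proof of Theorem \ref{main} and on Proposition \ref{summands}.
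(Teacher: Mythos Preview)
Your proof is correct and follows essentially the same route as the paper's. The only organizational difference is that the paper applies the right-exact functor $-\otimes_CB$ (rather than $-\otimes_CE$) to the epimorphism $M^d\twoheadrightarrow Y$, obtaining $Y\otimes_CB\in\text{Gen}(M\otimes_CB)$ and hence $\text{Hom}_B(Y\otimes_CB,\tau_B(M\otimes_CB))=0$ directly at the $B$-level, and then computes only the remaining cross-term $\text{Hom}_B(M\otimes_CB,\tau_B(Y\otimes_CB))$ via the adjunction; you instead pass everything through the adjunction first and use $Y\otimes_CE\in\text{Gen}(M\otimes_CE)$ at the $C$-level. Both arguments collapse the same four conditions to one, and your Step~1 (why not-Bongartz forces $Y\in\text{Gen}M$) and Step~3 (rigid $\Rightarrow$ tilting) spell out details the paper leaves implicit.
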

\begin{proof}
Since $Y$ is indecomposable and not the Bongartz $\tau_C$-complement, we have $Y\in\text{Gen}M$ by proposition $\ref{ind}$.  Thus, there exists an epimorphism $f:M^d\rightarrow Y$ where $d\geq0$.  The functor $\_\otimes_CB$ is right exact and applying to $f$ yields an epimorphism $f\otimes_C1_E:(M\otimes_CB)^d\rightarrow Y\otimes_CB$.  Since $M\otimes_CB$ is $\tau_B$-rigid and $Y\otimes_CB\in\text{Gen}(M\otimes_CB)$, we have $\text{Hom}_B(Y\otimes_CB,\tau_B(M\otimes_CB))=0$.  Using lemma $\ref{Adjunct}$ and proposition $\ref{SS, Proposition 3.6}$, we have
\[   
\text{Hom}_B(M\otimes_CB,\tau_B(Y\otimes_CB))\cong \text{Hom}_C((M\otimes_CB)_C,\tau_CY)\cong 
\]
\[
\text{Hom}_C(M,\tau_CY)\oplus \text{Hom}_C(M\otimes_CE,\tau_CY).
\]
Thus, $\text{Hom}_C(M\otimes_CE,\tau_CY)=0$ if and only if $\text{Hom}_B(M\otimes_CB,\tau_B(Y\otimes_CB))=0$ and our statement follows.
\end{proof}

\section{$M$ as a $\tau$-rigid $B$-module}
In this section, we present several results concerning a $C$-module $M$ when $M$ is also a $\tau_B$-rigid module.  Throughout, we assume $B$ is a split extension of $C$ by a nilpotent bimodule $E$ and $M$ is a $\tau_C$-rigid module.  We begin with a sufficient condition for $M$ to be $\tau_B$-rigid.

\begin{prop}
\label{result}
If $\emph{Hom}_C(M\otimes_CE,\emph{Gen}M)=0$, then $M$ is $\tau_B$-rigid.
\end{prop}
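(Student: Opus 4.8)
The plan is to show directly that $\mathrm{Hom}_B(M,\tau_B M)=0$ by passing to the $C$-module structure and using the hypothesis $\mathrm{Hom}_C(M\otimes_C E,\mathrm{Gen}M)=0$. First I would invoke Lemma~\ref{AR submodule}: since $M$ is a $C$-module, $M\otimes_C B$ is defined, and $\tau_B(M\otimes_C B)$ is a submodule of $\tau_B M$. This gives a short exact sequence of $B$-modules
\[
0\rightarrow \tau_B(M\otimes_C B)\rightarrow \tau_B M\rightarrow Q\rightarrow 0
\]
for some $B$-module $Q$. Applying $\mathrm{Hom}_B(M,-)$ yields the exact sequence
\[
0\rightarrow \mathrm{Hom}_B(M,\tau_B(M\otimes_C B))\rightarrow \mathrm{Hom}_B(M,\tau_B M)\rightarrow \mathrm{Hom}_B(M,Q),
\]
so it suffices to show the first and third terms vanish.

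For the first term, I would use Lemma~\ref{Adjunct}: $\mathrm{Hom}_B(M,\tau_B(M\otimes_C B))\cong \mathrm{Hom}_C(M\otimes_B B_C,\tau_C M)$. But $M\otimes_B B_C$ is just the forgetful functor applied to the $B$-module $M$, which as a $C$-module is again $M$ (the original $C$-module we started with). Hence this term is $\mathrm{Hom}_C(M,\tau_C M)$, which vanishes because $M$ is $\tau_C$-rigid by assumption. The subtle point to check carefully is that the $C$-module underlying the $B$-module $M$ really is the original $C$-module $M$ — this is where the precise meaning of ``$M$ is $\tau_B$-rigid'' for a $C$-module $M$ must be pinned down, namely that $M$ carries a compatible $B$-action extending the $C$-action via $\pi$.

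For the third term, I need to understand $Q$. I expect $Q$ is (a quotient of, or can be compared with) something like $D(E\otimes_C DM)$ or $\tau_B M / \tau_B(M\otimes_C B)$, and the key is that $Q$ should lie in $\mathrm{Gen}M$ or have $M\otimes_C E$ surjecting onto it in a suitable sense, so that $\mathrm{Hom}_B(M,Q)$ is controlled by $\mathrm{Hom}_C(M\otimes_C E,\mathrm{Gen}M)=0$. Concretely, I would try to identify $Q$ using the known structure: from Lemma~\ref{AM1}, $\tau_B(M\otimes_C B)\cong\mathrm{Hom}_C({}_BB_C,\tau_C M)$, and dualizing Proposition~\ref{SS, Proposition 3.6}(b) applied appropriately (or applying $\mathrm{Hom}_C(-,\tau_C M)$ to the sequence $0\to E\to B\to C\to 0$ of $C$-$C$-bimodules), the cokernel $Q$ should be expressible via $E$. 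The main obstacle will be pinning down this cokernel $Q$ precisely and verifying that $\mathrm{Hom}_B(M,Q)$ is governed by $\mathrm{Hom}_C(M\otimes_C E,-)$ evaluated on modules generated by $M$; once that identification is in place, the hypothesis closes the argument. Alternatively, if a cleaner route exists, one could apply $\mathrm{Hom}_B(M,-)$ directly to sequence (a) of Proposition~\ref{SS, Proposition 3.6}, namely $0\to M\otimes_C E\to M\otimes_C B\to M\to 0$, combined with $\tau_B$-computations, but I expect the submodule approach via Lemma~\ref{AR submodule} to be the most direct.
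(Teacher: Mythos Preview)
Your primary approach has a genuine gap at the third term. You want $\Hom_B(M,Q)=0$ where $Q=\tau_B M/\tau_B(M\otimes_C B)$, and you hope this is ``governed by $\Hom_C(M\otimes_C E,-)$ evaluated on modules generated by $M$.'' But there is no reason $Q$ lies in $\Gen M$, and more seriously, the hypothesis places $M\otimes_C E$ in the \emph{first} argument of $\Hom_C$, while you are trying to control a space with $M$ in the first argument. There is no adjunction or obvious identification that exchanges these roles. Applying $\Hom_C(-,\tau_C M)$ to $0\to E\to B\to C\to 0$ tells you about quotients of $\Hom_C(B,\tau_C M)\cong\tau_B(M\otimes_C B)$, not about $\tau_B M$ itself, for which there is no clean formula in general; so your proposed identification of $Q$ does not go through. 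Your suggested alternative, applying $\Hom_B(M,-)$ to sequence (a), has the same defect: it produces terms like $\Hom_B(M,M\otimes_C E)$ and $\Ext^1_B(M,M\otimes_C E)$, none of which match the hypothesis.

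The missing idea is Proposition~\ref{GenM Result} (Auslander--Smal{\o}): $\Hom_B(M,\tau_B M)=0$ is equivalent to $\Ext^1_B(M,\Gen M)=0$, and likewise $\Ext^1_B(M\otimes_C B,\Gen M)=0$ is equivalent to $\Hom_B(M,\tau_B(M\otimes_C B))=0$. This lets you swap source and target and pass from $\Hom$ to $\Ext^1$. The paper applies $\Hom_B(-,\Gen M)$ (contravariant, first variable) to the sequence $0\to M\otimes_C E\to M\otimes_C B\to M\to 0$, obtaining
\[
\Hom_B(M\otimes_C E,\Gen M)\to \Ext^1_B(M,\Gen M)\to \Ext^1_B(M\otimes_C B,\Gen M).
\]
The right term vanishes by your own computation of $\Hom_B(M,\tau_B(M\otimes_C B))\cong\Hom_C(M,\tau_C M)=0$, translated via Proposition~\ref{GenM Result}; the left term vanishes directly from the hypothesis by restriction of scalars (any $N\in\Gen_B M$ has $N_C\in\Gen_C M$). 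Hence $\Ext^1_B(M,\Gen M)=0$, and one more use of Proposition~\ref{GenM Result} gives $\tau_B$-rigidity. So the correct route uses sequence (a) as you half-suspected, but in the contravariant variable, with Proposition~\ref{GenM Result} as the bridge.
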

\begin{proof}
By proposition $\ref{SS, Proposition 3.6}$, we have the following short exact sequence in $\mathop{\text{mod}}B$ 

\[
0\rightarrow M\otimes_CE\rightarrow M\otimes_CB\rightarrow M\rightarrow 0.  
\]
Applying $\text{Hom}_B(-,\text{Gen}M)$, we obtain an exact sequence
\[
\text{Hom}_B(M\otimes_CE,\text{Gen}M)\rightarrow \text{Ext}_B^1(M,\text{Gen}M)\rightarrow \text{Ext}_B^1(M\otimes_CB,\text{Gen}M).
\]
First, we wish to show $\text{Ext}_B^1(M\otimes_CB,\text{Gen}M)=0$.  We know from proposition $\ref{GenM Result}$ this is equivalent to $\text{Hom}_B(M,\tau_B(M\otimes_CB))=0$.  By lemma $ \ref{Adjunct}$ and the assumption that $M$ is $\tau_C$-rigid, $\text{Hom}_B(M,\tau(M\otimes_CB))\cong\text{Hom}_C(M,\tau_CM)=0$.  Next, we want to show $\text{Hom}_B(M\otimes_CE,\text{Gen}M)=0$.  By restriction of scalars, any non-zero morphism from $M\otimes_CE$ to $\text{Gen}M$ in $\mathop{\text{mod}}B$ would give a non-zero morphism in $\mathop{\text{mod}}C$, contrary to our assumption.  Thus, $\text{Hom}_B(M\otimes_CE,\text{Gen}M)=0$.  We conclude $\text{Ext}_B^1(M,\text{Gen}M)=0$ and proposition $\ref{GenM Result}$ implies $M$ is $\tau_B$-rigid.

\end{proof}
The next result determines precisely when $M\otimes_CB$ is Ext-projective in $^{\perp}(\tau_BM)$.  Recall, we denote the $C$-module structure of $\tau_BM$ by $(\tau_BM)_C$.
\begin{prop}
\label{result2}
Suppose $M$ is $\tau_B$-rigid.  Then $M\otimes_CB\in P(^{\perp}(\tau_BM))$ if and only if $\emph{Hom}_C(M,(\tau_BM)_C)=0.$
\end{prop}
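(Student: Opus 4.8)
The plan is to use the characterization of Ext-projectivity from Proposition~\ref{Ext1}, namely that for the $\tau_C$-rigid module $M$ (which is also $\tau_B$-rigid by hypothesis) we have $M\otimes_CB\in P(^{\perp}(\tau_BM))$ if and only if
\[
\mathop{\text{Gen}}(M\otimes_CB)\subseteq~^\perp(\tau_BM)\subseteq~^\perp(\tau_B(M\otimes_CB)).
\]
Here I am using that $^{\perp}(\tau_BM)$ is a functorially finite torsion class (Lemma~\ref{AIR, Lemma 2.11}) and that $M\otimes_CB$ is $\tau_B$-rigid, which follows once the equivalent condition $\mathop{\text{Hom}}_C(M,(\tau_BM)_C)=0$ is in force: indeed, the short exact sequence of $B$-modules $0\to M\otimes_CE\to M\otimes_CB\to M\to 0$ from Proposition~\ref{SS, Proposition 3.6} and the fact that $M$ is $\tau_B$-rigid reduce $\tau_B$-rigidity of $M\otimes_CB$ to a vanishing statement that I will extract below.

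First I would unwind the right-hand containment. By Lemma~\ref{Adjunct}, for any $B$-module $X$ we have $\mathop{\text{Hom}}_B(X,\tau_B(M\otimes_CB))\cong\mathop{\text{Hom}}_C(X_C,\tau_CM)$, while $\tau_BM$ is genuinely a $B$-module, so $^\perp(\tau_BM)$ is the honest perpendicular category in $\mathop{\text{mod}}B$. The key observation is that by Lemma~\ref{AR submodule}, $\tau_B(M\otimes_CB)$ is a submodule of $\tau_BM$; hence any $X$ with $\mathop{\text{Hom}}_B(X,\tau_BM)=0$ automatically satisfies $\mathop{\text{Hom}}_B(X,\tau_B(M\otimes_CB))=0$, because a nonzero map into the submodule composes to a nonzero map into $\tau_BM$. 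Therefore the containment $^\perp(\tau_BM)\subseteq~^\perp(\tau_B(M\otimes_CB))$ holds unconditionally, and the only real content is the left-hand containment $\mathop{\text{Gen}}(M\otimes_CB)\subseteq~^\perp(\tau_BM)$.

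For the left containment, since $^\perp(\tau_BM)$ is closed under quotients (it is a torsion class) and $M\in~^\perp(\tau_BM)$ by $\tau_B$-rigidity, it suffices to have $M\otimes_CB\in~^\perp(\tau_BM)$, i.e. $\mathop{\text{Hom}}_B(M\otimes_CB,\tau_BM)=0$. By the adjunction isomorphism in Lemma~\ref{Adjunct}, $\mathop{\text{Hom}}_B(M\otimes_CB,\tau_BM)\cong\mathop{\text{Hom}}_C(M,\mathop{\text{Hom}}_B(_CB_B,\tau_BM))=\mathop{\text{Hom}}_C(M,(\tau_BM)_C)$, using that $\mathop{\text{Hom}}_B(_CB_B,-)$ is the forgetful functor. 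So $\mathop{\text{Gen}}(M\otimes_CB)\subseteq~^\perp(\tau_BM)$ is equivalent to $\mathop{\text{Hom}}_C(M,(\tau_BM)_C)=0$. This same vanishing also delivers $\tau_B$-rigidity of $M\otimes_CB$: applying $\mathop{\text{Hom}}_B(-,\tau_BM)$ and then restricting along the inclusion of the submodule $\tau_B(M\otimes_CB)$, a nonzero element of $\mathop{\text{Hom}}_B(M\otimes_CB,\tau_B(M\otimes_CB))$ would combine with the fact that the $C$-restriction of $M\otimes_CB$ is $M\oplus(M\otimes_CE)$ and that $M$ is $\tau_B$-rigid to produce a contradiction with $\mathop{\text{Hom}}_C(M,(\tau_BM)_C)=0$ via Proposition~\ref{SS, Proposition 3.6}; alternatively one invokes Theorem~\ref{AM, Main Result} after checking $\mathop{\text{Hom}}_C(M\otimes_CE,\tau_CM)=0$ follows from the hypothesis. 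Combining the two containments with Proposition~\ref{Ext1} yields the equivalence.

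The main obstacle I anticipate is the bookkeeping needed to show that the hypothesis $\mathop{\text{Hom}}_C(M,(\tau_BM)_C)=0$ actually forces $M\otimes_CB$ to be $\tau_B$-rigid in the first place — Proposition~\ref{Ext1} is stated for $\tau_C$-rigid modules, so one must first legitimately know $M\otimes_CB$ is $\tau_B$-rigid before applying it. I expect this to go through cleanly by observing that $(\tau_BM)_C$ contains $\tau_CM$ as a submodule (a consequence of Lemma~\ref{AR submodule} read at the level of $C$-modules, or directly from the structure of $\tau_B$ on an induced module via Lemma~\ref{AM1}), so that $\mathop{\text{Hom}}_C(M,(\tau_BM)_C)=0$ already implies $\mathop{\text{Hom}}_C(M\otimes_CE,\tau_CM)=0$ after using that $M\otimes_CE$ is a quotient of a power of $M$ is \emph{not} automatic — instead one uses that $\mathop{\text{Hom}}_C(M\otimes_CE,\tau_CM)$ embeds into $\mathop{\text{Hom}}_B(M\otimes_CE,\tau_BM)$, which vanishes since $M\otimes_CE$ is a submodule of $M\otimes_CB$ and the latter maps to $\tau_BM$ by zero. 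Once $\tau_B$-rigidity of $M\otimes_CB$ is secured, the rest is the two short containment arguments above.
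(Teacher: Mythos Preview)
Your argument is correct, and in substance it is very close to the paper's. Both directions hinge on the adjunction $\Hom_B(M\otimes_CB,\tau_BM)\cong\Hom_C(M,(\tau_BM)_C)$ together with Lemma~\ref{AR submodule}. The difference is only in packaging: the paper invokes Proposition~\ref{Ext2} directly --- since $\tau_B(M\otimes_CB)$ is a submodule of $\tau_BM$ it lies in $\mathrm{Cogen}(\tau_BM)$, the torsionfree class, so each indecomposable summand of $M\otimes_CB$ is Ext-projective in $^{\perp}(\tau_BM)$ --- whereas you route through Proposition~\ref{Ext1} and the pair of containments $\Gen(M\otimes_CB)\subseteq{}^{\perp}(\tau_BM)\subseteq{}^{\perp}(\tau_B(M\otimes_CB))$. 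Your right-hand containment is exactly the same use of Lemma~\ref{AR submodule} that the paper makes.

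Your final paragraph, however, manufactures a difficulty that is not there. Once you have $\Hom_B(M\otimes_CB,\tau_BM)=0$ and $\tau_B(M\otimes_CB)\hookrightarrow\tau_BM$, the vanishing of $\Hom_B(M\otimes_CB,\tau_B(M\otimes_CB))$ is immediate: any nonzero map would compose with the inclusion to a nonzero map into $\tau_BM$. Equivalently, your two containments already place $M\otimes_CB$ inside $^{\perp}(\tau_B(M\otimes_CB))$, which \emph{is} $\tau_B$-rigidity. So there is no need to detour through $\Hom_C(M\otimes_CE,\tau_CM)$, through Theorem~\ref{AM, Main Result}, or through any claim about $(\tau_BM)_C$ containing $\tau_CM$. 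The paper's use of Proposition~\ref{Ext2} avoids even stating this issue, since Ext-projectivity there does not presuppose $\tau$-rigidity; that is the one genuine economy of its route over yours.
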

\begin{proof}
Assume $M\otimes_CB\in P(^{\perp}(\tau_BM))$.  Then $\text{Hom}_B(M\otimes_CB,\tau_BM)=0$.  Using lemma $\ref{Adjunct}$, we have $\text{Hom}_B(M\otimes_CB,\tau_BM)\cong\text{Hom}_C(M,(\tau_BM)_C)=0$.  Next, assume $\text{Hom}_C(M,(\tau_BM)_C)=0$.  Again, lemma $\ref{Adjunct}$ gives  $\text{Hom}_B(M\otimes_CB,\tau_BM)=0$.  Thus, $M\otimes_CB\in~^{\perp}(\tau_BM)$ and we need to show $M\otimes_CB\in(P^{\perp}(\tau_BM))$.  We have $\tau_B(M\otimes_CB)\in\text{Cogen}(\tau_BM)$ by lemma $\ref{AR submodule}$ and proposition $\ref{Ext2}$ gives $M\otimes_CB$ is Ext-projective in $^{\perp}(\tau_BM)$.

\end{proof}
Suppose $U$ is the Bongartz $\tau$-complement of $M$ in $\mathop{\text{mod}}C$.  Our main result gives a necessary and sufficient condition for $U\otimes_CB$ to be the Bongartz $\tau$-complement of $M$ in $\mathop{\text{mod}}B$.
\begin{theorem}
\label{main3}
Suppose $M$ is $\tau_B$-rigid.  Then $U\otimes_CB$ is the Bongartz $\tau$-complement in $\mathop{\emph{mod}}B$ if and only if $\emph{Hom}_C(U,(\tau_BM)_C)=0$.
\end{theorem}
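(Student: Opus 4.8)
The plan is to follow the structure of the proof of theorem \ref{main}, with lemma \ref{AR submodule} playing the role there played by the short exact sequence $(a)$ of proposition \ref{SS, Proposition 3.6}. For the forward implication, suppose $U\otimes_CB$ is the Bongartz $\tau$-complement of $M$ in $\mathop{\text{mod}}B$; then $M\oplus(U\otimes_CB)$ is $\tau_B$-tilting, so in particular $\text{Hom}_B(U\otimes_CB,\tau_BM)=0$. Applying the second isomorphism of lemma \ref{Adjunct} with $U$ in place of $M$ and $X=\tau_BM$, and recalling that $\text{Hom}_B({}_CB_B,-)$ is the forgetful functor, one gets $\text{Hom}_B(U\otimes_CB,\tau_BM)\cong\text{Hom}_C(U,(\tau_BM)_C)$, whence $\text{Hom}_C(U,(\tau_BM)_C)=0$.

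For the converse, assume $\text{Hom}_C(U,(\tau_BM)_C)=0$; the same isomorphism gives $U\otimes_CB\in{}^{\perp}(\tau_BM)$. By lemma \ref{AIR, Lemma 2.11}, ${}^{\perp}(\tau_BM)$ is a sincere functorially finite torsion class (as $M$ is $\tau_B$-rigid), and $M\in\text{add}P({}^{\perp}(\tau_BM))$ by proposition \ref{Ext1}. The central step is to show $U\otimes_CB$ is Ext-projective in ${}^{\perp}(\tau_BM)$; by proposition \ref{Ext1} it suffices to establish
\[
\text{Gen}(U\otimes_CB)\subseteq{}^{\perp}(\tau_BM)\subseteq{}^{\perp}(\tau_B(U\otimes_CB)).
\]
The first containment holds because ${}^{\perp}(\tau_BM)$ is closed under quotients. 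For the second, fix $X\in{}^{\perp}(\tau_BM)$. Using lemma \ref{AM1} to rewrite $\tau_B(M\otimes_CB)$ and $\tau_B(U\otimes_CB)$ as $\text{Hom}_C({}_BB_C,\tau_CM)$ and $\text{Hom}_C({}_BB_C,\tau_CU)$, and then the first isomorphism of lemma \ref{Adjunct}, one obtains $\text{Hom}_B(X,\tau_B(M\otimes_CB))\cong\text{Hom}_C(X_C,\tau_CM)$ and $\text{Hom}_B(X,\tau_B(U\otimes_CB))\cong\text{Hom}_C(X_C,\tau_CU)$. Now lemma \ref{AR submodule} says $\tau_B(M\otimes_CB)$ is a submodule of $\tau_BM$, so $\text{Hom}_B(X,\tau_B(M\otimes_CB))$ embeds into $\text{Hom}_B(X,\tau_BM)=0$, forcing $\text{Hom}_C(X_C,\tau_CM)=0$. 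Since $U$ is the Bongartz $\tau$-complement of $M$ over $C$, proposition \ref{Ext2} and lemma \ref{AIR, Lemma 2.11} give $\tau_CU\in\text{Cogen}(\tau_CM)$, so $\text{Hom}_C(X_C,\tau_CU)$ embeds into a power of $\text{Hom}_C(X_C,\tau_CM)=0$ and hence vanishes; thus $X\in{}^{\perp}(\tau_B(U\otimes_CB))$. Specializing this to $X=U\otimes_CB$ shows $U\otimes_CB$ is $\tau_B$-rigid, so proposition \ref{Ext1} yields $U\otimes_CB\in\text{add}P({}^{\perp}(\tau_BM))$, while specializing to $X=M$ gives $\text{Hom}_B(M,\tau_B(U\otimes_CB))=0$, so $M\oplus(U\otimes_CB)$ is $\tau_B$-rigid.

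It then remains to see that $M\oplus(U\otimes_CB)$ accounts for every indecomposable Ext-projective of ${}^{\perp}(\tau_BM)$, equivalently that it is $\tau_B$-tilting. I would do this by the summand count at the end of the proof of theorem \ref{main}: if $\text{add}(M\oplus(U\otimes_CB))$ were properly contained in $\text{add}P({}^{\perp}(\tau_BM))$, adjoining the missing indecomposables would produce a basic $\tau_B$-tilting module with strictly more than $|B|$ pairwise nonisomorphic indecomposable summands, contradicting proposition \ref{summands}, where one uses that $M\oplus U$ is $\tau_C$-tilting and that $|C|=|B|$ by proposition \ref{SS, Proposition 3.4}. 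Hence $M\oplus(U\otimes_CB)$ is $\tau_B$-tilting and $U\otimes_CB$ is its Bongartz $\tau$-complement.

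The step I expect to be the main obstacle is the second of the two displayed containments, and inside it the descent from the $B$-level vanishing $\text{Hom}_B(X,\tau_BM)=0$ to the $C$-level vanishing $\text{Hom}_C(X_C,\tau_CM)=0$: this is precisely what lemma \ref{AR submodule} provides once $\tau_B(M\otimes_CB)$ and $\tau_B(U\otimes_CB)$ have been reexpressed via lemmas \ref{AM1} and \ref{Adjunct}, after which the cogeneration of $\tau_CU$ by $\tau_CM$ closes the argument. A secondary point needing care is the closing count, since here---unlike in theorem \ref{main}---one completes $M$ itself rather than $M\otimes_CB$ inside $\mathop{\text{mod}}B$, so one must verify directly that $M\oplus(U\otimes_CB)$ carries the full $|B|$ indecomposable summands.
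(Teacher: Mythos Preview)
Your proof is correct and follows essentially the same route as the paper's: the forward implication via the adjunction of lemma~\ref{Adjunct}, the converse via proposition~\ref{Ext1} and the chain $\text{Gen}(U\otimes_CB)\subseteq{}^{\perp}(\tau_BM)\subseteq{}^{\perp}(\tau_B(U\otimes_CB))$, where lemma~\ref{AR submodule} is used to pass from $\text{Hom}_B(X,\tau_BM)=0$ to $\text{Hom}_C(X_C,\tau_CM)=0$ and then the cogeneration $\tau_CU\in\text{Cogen}(\tau_CM)$ finishes the containment, followed by the summand count from theorem~\ref{main}. The only cosmetic differences are that you argue the second containment directly rather than by contradiction, and you make explicit the $\tau_B$-rigidity of $U\otimes_CB$ (needed to invoke proposition~\ref{Ext1}) and the vanishing $\text{Hom}_B(M,\tau_B(U\otimes_CB))=0$, both of which the paper leaves implicit; your flag about the summand count for $M\oplus(U\otimes_CB)$ rather than $(M\otimes_CB)\oplus(U\otimes_CB)$ is also well placed.
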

\begin{proof}
Assume $U\otimes_CB$ is the Bongartz $\tau$-complement of $M$.  Then $\text{Hom}_B(U\otimes_CB,\tau_BM)=0$ and lemma $\ref{Adjunct}$ gives $\text{Hom}_B(U\otimes_CB,\tau_BM)\cong\text{Hom}_C(U,(\tau_BM)_C)=0$.  Next, assume $\text{Hom}_C(U,(\tau_BM)_C)=0$.  Again, lemma $\ref{Adjunct}$ gives $\text{Hom}_B(U\otimes_CB,\tau_BM)=0$.  Thus, $U\otimes_CB\in~^{\perp}(\tau_BM)$ and we need to show $U\otimes_CB\in(P^{\perp}(\tau_BM))$.  Using proposition $\ref{Ext1}$, we need to show the following containments
\[
\text{Gen}(U\otimes_CB)\subseteq~^{\perp}(\tau_BM)\subseteq~^{\perp}(\tau_B(U\otimes_CB)).
\]
The first is clear so let $X\in~^{\perp}(\tau_BM)$.  We need to show $X\in~^{\perp}(\tau_B(U\otimes_CB))$.  If $X\notin~^\perp(\tau_B(U\otimes_CB))$, then lemma $\ref{Adjunct}$ implies $\text{Hom}_B(X,\tau_B(U\otimes_CB))\cong\text{Hom}_C(X_C,\tau_CU)\not=0$.  Since $\tau_CU\in\text{Cogen}(\tau_CM)$, we would have $\text{Hom}_C(X_C,\tau_CM)\not=0$.  Since we assumed $X\in~^{\perp}(\tau_BM)$ and $\tau_B(M\otimes_CB)\in\text{Cogen}(\tau_BM)$ by lemma $\ref{AR submodule}$, we must have $\text{Hom}_B(X,\tau_B(M\otimes_CB))=0$.  However, using lemma $\ref{Adjunct}$, we see $\text{Hom}_B(X,\tau_B(M\otimes_CB))\cong\text{Hom}_C(X_C,\tau_CM)=0$, a contradiction.  Thus, we must have $X\in~^{\perp}(\tau_B(U\otimes_CB))$ and conclude by proposition $\ref{Ext1}$ that  $U\otimes_CB\in(P^{\perp}(\tau_BM))$.  Finally, to show $U\otimes_CB$ comprises all the indecomposable Ext-projective modules in $^{\perp}(\tau_BM)$ up to isomorphism not in add$M$, we apply the same reasoning used in the conclusion of theorem $\ref{main}$

\end{proof}
Our last result show that $(M\otimes_CB)\oplus (U\otimes_CB)$ and $M\oplus U$ are both $\tau_B$-tilting if and only if they are isomorphic to each other.
\begin{prop}
$M\oplus U$ and $(M\otimes_CB)\oplus (U\otimes_CB)$ are both $\tau_B$-tilting if and only if $M\otimes_CE=0$ and $U\otimes_CE=0$.

\end{prop}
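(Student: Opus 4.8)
We first dispose of the easy implication. Suppose $M\otimes_CE=0$ and $U\otimes_CE=0$. Then in the short exact sequences of Proposition \ref{SS, Proposition 3.6}(a) the left-hand terms vanish, so $M\otimes_CB\cong M$ and $U\otimes_CB\cong U$ as $B$-modules; hence $(M\otimes_CB)\oplus(U\otimes_CB)\cong M\oplus U$, and it is enough to show $M\oplus U$ is $\tau_B$-tilting. Since $(M\oplus U)\otimes_CE=(M\otimes_CE)\oplus(U\otimes_CE)=0$, Theorem \ref{AM, Main Result} shows $(M\oplus U)\otimes_CB\cong M\oplus U$ is $\tau_B$-rigid. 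As $M\oplus U$ is $\tau_C$-tilting it is basic with exactly $|C|$ indecomposable summands, and $|C|=|B|$ by Proposition \ref{SS, Proposition 3.4}; thus $M\oplus U$ is a $\tau_B$-rigid module with $|B|$ pairwise nonisomorphic indecomposable summands, hence $\tau_B$-tilting.

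Conversely, assume $M\oplus U$ and $T':=(M\otimes_CB)\oplus(U\otimes_CB)$ are both $\tau_B$-tilting. Put $N=M\oplus U$, so $T'=N\otimes_CB$, and let $K=N\otimes_CE=(M\otimes_CE)\oplus(U\otimes_CE)$; Proposition \ref{SS, Proposition 3.6}(a) gives a short exact sequence $0\to K\to T'\xrightarrow{\phi}N\to 0$ in $\mathop{\text{mod}}B$. The decisive claim is that $K\in\mathop{\text{Gen}}(N)$ in $\mathop{\text{mod}}B$, and I first explain why this claim finishes everything. Since $N$ is $\tau_B$-tilting, $\mathop{\text{Gen}}(N)={}^{\perp}(\tau_BN)$ is a functorially finite torsion class by Lemma \ref{AIR, Lemma 2.11} and Theorem \ref{AIR, 2.12}, hence closed under extensions; as $N,K\in\mathop{\text{Gen}}(N)$, the sequence above forces $T'\in\mathop{\text{Gen}}(N)$. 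But $N$ is a $C$-module, so $E=\ker(B\to C)$ annihilates $N$, and therefore annihilates every quotient of a power of $N$, i.e.\ every object of $\mathop{\text{Gen}}(N)$; in particular $E$ annihilates $T'=N\otimes_CB$. Applying the right $B$-action of $E$ to elements $n\otimes 1_B$ gives $n\otimes e=0$ in $N\otimes_CB$ for all $n\in N$, $e\in E$, and since $N\otimes_CE$ embeds into $N\otimes_CB$ by Proposition \ref{SS, Proposition 3.6}(a) and is generated by such elements, $N\otimes_CE=0$; that is $M\otimes_CE=0$ and $U\otimes_CE=0$, as required.

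There remains the claim $K\in\mathop{\text{Gen}}(N)$ in $\mathop{\text{mod}}B$, equivalently $\mathop{\text{Hom}}_B(K,\tau_BN)=0$. Restriction of scalars embeds $\mathop{\text{Hom}}_B(K,\tau_BN)$ into $\mathop{\text{Hom}}_C(K_C,(\tau_BN)_C)$, so it suffices to prove $\mathop{\text{Hom}}_C(K_C,(\tau_BN)_C)=0$, and for this I would filter $(\tau_BN)_C$ and kill $\mathop{\text{Hom}}_C(K_C,-)$ on each layer by left exactness. Three facts feed in. First, $T'$ is $\tau_B$-rigid, so Theorem \ref{AM, Main Result} gives $\mathop{\text{Hom}}_C(K_C,\tau_CN)=0$, whence $K_C=N\otimes_CE\in{}^{\perp}(\tau_CN)=\mathop{\text{Gen}}(N)$ in $\mathop{\text{mod}}C$ (Theorem \ref{AIR, 2.12}), and consequently $N\otimes_CE\otimes_CE\in\mathop{\text{Gen}}(N\otimes_CE)\subseteq\mathop{\text{Gen}}(N)$ in $\mathop{\text{mod}}C$ as well. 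Second, by Lemma \ref{AM1} together with Proposition \ref{SS, Proposition 3.6}(b) applied to the $C$-module $\tau_CN$, there is a short exact sequence $0\to\tau_CN\to\tau_B(N\otimes_CB)\to D(E\otimes_CD\tau_CN)\to 0$ of $B$-modules, and by Lemma \ref{AR submodule} the middle term is a submodule of $\tau_BN$; so $(\tau_BN)_C$ is filtered by $\tau_CN$, by $D(E\otimes_CD\tau_CN)$, and by the quotient $(\tau_BN/\tau_B(N\otimes_CB))_C$. Third, since $U$ is the Bongartz $\tau$-complement of $M$ in $\mathop{\text{mod}}C$, Lemma \ref{AIR, Lemma 2.11} and Proposition \ref{Ext2} give $\tau_CN=\tau_CM\oplus\tau_CU\in\mathop{\text{Cogen}}(\tau_CM)$. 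The first fact kills $\mathop{\text{Hom}}_C(K_C,-)$ on the layer $\tau_CN$; for the layer $D(E\otimes_CD\tau_CN)$, standard Hom–tensor–duality isomorphisms rewrite $\mathop{\text{Hom}}_C(K_C,D(E\otimes_CD\tau_CN))$ as $\mathop{\text{Hom}}_C(N\otimes_CE\otimes_CE,\tau_CN)$, which vanishes again by the first fact. The genuinely delicate point — and where I expect the real work of the proof to lie — is the top layer $(\tau_BN/\tau_B(N\otimes_CB))_C$: one must describe the cokernel of the inclusion in Lemma \ref{AR submodule} precisely enough (using \cite{AZ}) to see that $\mathop{\text{Hom}}_C(K_C,-)$ kills it too, with the $\mathop{\text{Cogen}}(\tau_CM)$ statement of the third fact absorbing the bookkeeping. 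Everything else is routine manipulation of the functors $-\otimes_CB$, $-\otimes_CE$ and their adjoints via Lemma \ref{Adjunct}.
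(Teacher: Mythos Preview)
Your easy direction is fine and essentially matches the paper. The hard direction, however, is incomplete: you reduce everything to the claim $K\in\Gen_B(N)$ (equivalently $\Hom_B(K,\tau_BN)=0$), then attempt to prove this by filtering $(\tau_BN)_C$ through $\tau_CN$, $D(E\otimes_CD\tau_CN)$, and the quotient $\tau_BN/\tau_B(N\otimes_CB)$. You dispatch the first two layers, but you explicitly leave the third unproven, calling it ``the genuinely delicate point --- and where I expect the real work of the proof to lie.'' That is a genuine gap, not a routine step: the cokernel of the inclusion in Lemma~\ref{AR submodule} is not described anywhere in the paper's toolkit, and nothing in your three facts gives you control of $\Hom_C(K_C,-)$ on it. As written, the argument does not close.

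The paper's route is much shorter and avoids $\tau_BN$ altogether. Your own ``first fact'' already gives $K_C=N\otimes_CE\in{}^{\perp}(\tau_CN)=\Gen(N)$, since $N=M\oplus U$ is $\tau_C$-tilting (Theorem~\ref{AIR, 2.12}). From here the paper does not pass to $\Hom_B(-,\tau_BN)$ at all; instead it uses the $\tau_B$-rigidity of $N$ in its $\Ext$ form: Proposition~\ref{GenM Result} gives $\Ext^1_B(N,\Gen(N))=0$, whereas the canonical sequence $0\to K\to N\otimes_CB\to N\to 0$ of Proposition~\ref{SS, Proposition 3.6} furnishes a nonzero class in $\Ext^1_B(N,K)$ whenever $K\neq 0$, yielding the contradiction directly. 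So the key idea you were looking for is to exploit $\Ext^1_B(N,-)$ against $K$ rather than $\Hom_B(K,-)$ against $\tau_BN$; this makes the filtration of $(\tau_BN)_C$ --- and in particular the undetermined top layer --- entirely unnecessary. Your endgame (if $K\in\Gen_B(N)$ then $T'\in\Gen_B(N)$, hence $E$ annihilates $T'$, hence $K=0$) is a nice alternative to the nonsplitness argument once $K\in\Gen_B(N)$ is in hand, but the paper's $\Ext^1$ step gets there without the detour.
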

\begin{proof}
Assume $M\oplus U$ and $(M\otimes_CB)\oplus (U\otimes_CB)$ are both $\tau_B$-tilting.  Since $M\otimes_BU$ is $\tau_B$-tilting, we know $\text{Ext}_B^1(M\oplus U, \text{Gen}(M\oplus U))=0$ by proposition $\ref{GenM Result}$.  Since $(M\otimes_CB)\oplus (U\otimes_CB)$ is $\tau_B$-tilting, we know $\text{Hom}_C((M\otimes_CE)\oplus(U\otimes_CE),\tau_C(M\oplus U))=0$ by theorems $\ref{AM, Main Result}$ and $\ref{main}$.  Thus, $(M\otimes_CE)\oplus(U\otimes_CE)\in\text{Gen}(M\oplus U)$ by theorem $\ref{AIR, 2.12}$.  However, $\text{Ext}_B^1(M\oplus U, (M\otimes_CE)\oplus(U\otimes_CE))\neq0$ by proposition $\ref{SS, Proposition 3.6}$.  This contradicts $\text{Ext}_B^1(M\oplus U, \text{Gen}(M\oplus U))=0$ unless $M\otimes_CE$ and $U\otimes_CE$ are equal to $0$.
\par 
Assume 
$M\otimes_CE$ and $U\otimes_CE$ are equal to $0$.  Proposition $\ref{SS, Proposition 3.6}$ implies $(M\otimes_CB)\oplus (U\otimes_CB)\cong(M\oplus U)$.  Also,  $\text{Hom}_C((M\otimes_CE)\oplus(U\otimes_CE),\tau_C(M\oplus U))=0$ implies $(M\otimes_CB)\oplus (U\otimes_CB)$ is $\tau_B$-tilting by theorems $\ref{AM, Main Result}$ and $\ref{main}$ and our statement follows.

\end{proof}

\section{Example}
In this section we give two examples illustrating our results.  We will construct a cluster-tilted algebra from a tilted algebra.  Such a construction is an example of a split extension.  Let $A$ be the path algebra of the following quiver:
\[
\xymatrix@R10pt{&&&4\ar[dl]\\1&2\ar[l]&3\ar[l]\\ &&&5\ar[ul]}
\]
\par
Since $A$ is a hereditary algebra, we may construct a tilted algebra.  To do this, we need an $A$-module which is tilting.  Consider the Auslander-Reiten quiver of $A$ which is given by:
\[
\xymatrix@C=10pt@R=0pt{
{\begin{array}{c} \bf 1\end{array}}\ar[dr]&&
{\begin{array}{c} 2\end{array}}\ar[dr]&&
{\begin{array}{c} 3\end{array}}\ar[dr]&&
{\begin{array}{c}  \bf 4\ 5\\  \bf 3\\ \bf 2\\ \bf 1 \end{array}}\ar[dr]&&
\\
&{\begin{array}{c}  \bf 2\\ \bf 1\end{array}}\ar[dr]\ar[ur]&&
{\begin{array}{c} 3\\2\end{array}}\ar[dr]\ar[ur]&&
{\begin{array}{c} 4\,5\\33\\2\\1\end{array}}\ar[dr]\ar[ur]&&
{\begin{array}{c} 4\,5\\3\\2\end{array}}\ar[dr]&&
\\
&&{\begin{array}{c} 3\\2\\1\end{array}}\ar[dr]\ar[ur]\ar[r]&
{\begin{array}{c} 4\\3\\2\\1\end{array}}\ar[r]&
{\begin{array}{c} 4\,5\\33\\22\\1\end{array}}\ar[dr]\ar[ur]\ar[r]&
{\begin{array}{c} 5\\3\\2\end{array}}\ar[r]&
{\begin{array}{c} 4\,5\\33\\2\end{array}}\ar[dr]\ar[ur]\ar[r]&
{\begin{array}{c} 4\\3\end{array}}\ar[r]&
{\begin{array}{c} 4\,5\\3\end{array}}\ar[dr]\ar[r]&
{\begin{array}{c}  \bf 5\end{array}}
\\
&&&{\begin{array}{c} \bf  5\\ \bf 3\\ \bf 2\\ \bf 1\end{array}}\ar[ur]&&
{\begin{array}{c} 4\\3\\2\end{array}}\ar[ur]&&
{\begin{array}{c} 5\\3\end{array}}\ar[ur]&&
{\begin{array}{c} 4\end{array}}
}
\]

Let $T$ be the tilting $A$-module
\[
T=
{\begin{array}{c} 5\end{array}} \oplus
{\begin{array}{c} 4\,5\\3\\2\\1\end{array}} \oplus
{\begin{array}{c} 5\\3\\2\\1\end{array}} \oplus
{\begin{array}{c} 2\\1\end{array}} \oplus
{\begin{array}{c} 1\end{array}} 
\]

The corresponding titled algebra $C=\text{End}_AT$ is given by the bound quiver
$$\begin{array}{cc}
\xymatrix{1\ar[r]^\za&2\ar[r]^\zb&3\ar[r]^\zg&4\ar[r]&5}
&\quad\za\zb\zg=0\end{array}$$

Then, the Auslander-Reiten quiver of $C$ is given by: 

$$\xymatrix@C=10pt@R=0pt
{&&&{\begin{array}{c} 2\\ 3\\4\\5 \end{array}}\ar[dr]&&
&&
\\
&&{\begin{array}{c} 3\\4\\5 \end{array}}\ar[ur]\ar[dr]&&
{\begin{array}{c} 2\\ 3\\4 \end{array}}\ar[dr]&
\\
&{\begin{array}{c} 4\\5  \end{array}}\ar[ur]\ar[dr]&&
{\begin{array}{c} 3\\ 4  \end{array}}\ar[ur]\ar[dr]&&
{\begin{array}{c} 2\\3  \end{array}}\ar[r]\ar[dr]&
{\begin{array}{c} 1\\2\\ 3 \end{array}}\ar[r]&
{\begin{array}{c} 1\\2 \end{array}}\ar[dr]&&
\\
{\begin{array}{c}\ \\5\\ \  \end{array}}\ar[ur]&&
{\begin{array}{c}\ \\ 4\\ \  \end{array}}\ar[ur]&&
{\begin{array}{c}\ \\ 3\\ \  \end{array}}\ar[ur]&&
{\begin{array}{c}\ \\ 2\\ \  \end{array}}\ar[ur]&&
{\begin{array}{c}\ \\ 1\\ \  \end{array}}&&
}$$

The corresponding cluster-tilted algebra $B=C\ltimes\text{Ext}_C^2(DC,C)$ is given by the bound quiver 
$$\begin{array}{cc}
\xymatrix{1\ar[r]^\za&2\ar[r]^\zb&3\ar[r]^\zg&4\ar@/^20pt/[lll]^\zd\ar[r]&5}
&\quad \za\zb\zg=\zb\zg\zd=\zg\zd\za=\zd\za\zb=0 
\end{array}$$

Then, the Auslander-Retien quiver of $B$ is given by:

$$\xymatrix@C=3pt@R=0pt
{&&{\begin{array}{c} \end{array}}
&&{\begin{array}{c} 2\\ 3\\4\\5 \end{array}}\ar[dr]&&
{\begin{array}{c} \end{array}}&& 
{\begin{array}{c} 5 \end{array}}\ar[dr]&&
{\begin{array}{c} 4\\ 1\\2 \end{array}}\ar[dr]&&
{\begin{array}{c} \end{array}} &\cdots&
\\
&{\begin{array}{c} 4\\1 \end{array}}\ar[dr]&&
{\begin{array}{c} 3\\4\\5 \end{array}}\ar[ur]\ar[dr]&&
{\begin{array}{c} 2\\3\\4 \end{array}}\ar[dr]&&
{\begin{array}{c} \end{array}}&&
{\begin{array}{l} \ 4\\1\ 5\\2 \end{array}}\ar[ur]\ar[dr]&&
{\begin{array}{c} 4\\1 \end{array}}\ar[dr]&&
\cdots
\\
&{\begin{array}{c} 4\\ 5 \end{array}}\ar[r]&
{\begin{array}{c} 3\\44\\ 1\ 5 \end{array}}\ar[ur]\ar[dr]\ar[r]&
{\begin{array}{c} 3\\4\\1 \end{array}}\ar[r]&
{\begin{array}{c} 3\\4 \end{array}}\ar[ur]\ar[dr]&
{\begin{array}{c} \end{array}}&
{\begin{array}{c} 2\\ 3 \end{array}}\ar[dr]\ar[r]&
{\begin{array}{c} 1\\2\\ 3 \end{array}}\ar[r]&
{\begin{array}{c} 1\\2 \end{array}}\ar[dr]\ar[ur]&
{\begin{array}{c} \end{array}}&
{\begin{array}{c} 4\\1\ 5 \end{array}}\ar[dr]\ar[r]\ar[ur]&
{\begin{array}{c} 3\\4\\1\ 5 \end{array}}\ar[r]&
{\begin{array}{c} 3\\44\\ 1\ 5 \end{array}}\ar[ur]\ar[dr]\ar[r]&
\cdots
\\
&{\begin{array}{c}3 \\ 4\\ 1\ 5 \end{array}}\ar[ur]
&&{\begin{array}{c}\ \\ 4\\ \  \end{array}}\ar[ur]&&
{\begin{array}{c}\ \\ 3\\ \  \end{array}}\ar[ur]&&
{\begin{array}{c}\ \\ 2\\ \  \end{array}}\ar[ur]&&
{\begin{array}{c}\ \\ 1\\ \  \end{array}}\ar[ru]&&
{\begin{array}{c}\ \\ 4\\ 5  \end{array}}\ar[ur]&&
\cdots
}$$
\begin{example}
In $\mathop{\text{mod}}C$, consider $M={\begin{array}{c}2\\3\\4\\5\end{array}}\oplus{\begin{array}{c}2\\3\\4\end{array}}\oplus{\begin{array}{c}3\end{array}}$.  $M$ is a $\tau_C$-rigid module with Bongartz $\tau$-complement $U={\begin{array}{c}1\\2\\3\end{array}}\oplus{\begin{array}{c}3\\4\end{array}}$.  In this case, we have $M\otimes_CB\cong M$ which implies $M\otimes_CE=0$.  Thus, $M\otimes_CB\cong M$ is $\tau_B$-rigid and the induced module of $U$, $U\otimes_CB={\begin{array}{c}1\\2\\3\end{array}}\oplus{\begin{array}{c}3\\4\\1\end{array}}$, is the Bongartz $\tau$-complement in $\mathop{\text{mod}}B$.  Notice, we have $\tau_CM={\begin{array}{c}3\\4\\5\end{array}}\oplus{\begin{array}{c}4\end{array}}$, $U\otimes_CE={\begin{array}{c}1\end{array}}$, and $\text{Hom}_C(U\otimes_CE,\tau_CM)=0$, in accordance with theorem $\ref{main}$. 
\end{example}

\begin{example}
In $\mathop{\text{mod}}C$, consider $M={\begin{array}{c}3\\4\\5\end{array}}$.  $M$ is projective with Bongartz $\tau$-complement $U={\begin{array}{c}5\end{array}}\oplus{\begin{array}{c}4\\5\end{array}}\oplus{\begin{array}{c}2\\3\\4\\5\end{array}}\oplus{\begin{array}{c}1\\2\\3\end{array}}$. We have $M\otimes_CE=1$ and $\text{Hom}_C(M\otimes_EC,\text{Gen}M)=0$.  Thus, $M$ is $\tau_B$-rigid by proposition $\ref{result}$ with $\tau_BM={\begin{array}{c}4\\1\end{array}}$. Since  $M\otimes_CB={\begin{array}{c} 3\\4\\1\ 5 \end{array}}$,  proposition $\ref{result2}$ says $M\otimes_CB\in P(^{\perp}(\tau_BM))$ because $\text{Hom}_C(M,(\tau_BM)_C)=\text{Hom}_C(M,4\oplus 1)=0$. 
\par
~
\par
We have $U\otimes_CB={\begin{array}{c}5\end{array}}\oplus{\begin{array}{l} \ 4\\1\ 5\\2 \end{array}}\oplus{\begin{array}{c}2\\3\\4\\5\end{array}}\oplus{\begin{array}{c}1\\2\\3\end{array}}$.  Here, not every summand of $U\otimes_CB$ is a summand of the Bongartz $\tau$-complement in $\mathop{\text{mod}}B$.   because $\text{Hom}_B({\begin{array}{c}1\\2\\3\end{array}}\oplus{\begin{array}{l} \ 4\\1\ 5\\2 \end{array}},{\begin{array}{c}4\\1\end{array}})\not=0$.  Notice, $(\tau_BM)_C={\begin{array}{c}4\end{array}}\oplus{\begin{array}{c}1\end{array}}$ and $\text{Hom}_C({\begin{array}{c}1\\2\\3\end{array}}\oplus,{\begin{array}{c}4\end{array}}\oplus{\begin{array}{c}1\end{array}})\not=0$ in accordance with theorem $\ref{main3}$.  However, theorem $\ref{main3}$ guarantees ${\begin{array}{c}5\end{array}}\oplus{\begin{array}{c}2\\3\\4\\5\end{array}}$ are summands of the Bongartz $\tau$-complement in $\mathop{\text{mod}}B$  since $\text{Hom}_C({\begin{array}{c}5\end{array}}\oplus{\begin{array}{c}2\\3\\4\\5\end{array}}, 4\oplus 1)=0$.

\end{example}

\noindent Mathematics Faculty, University of Connecticut-Waterbury, Waterbury, CT 06702, USA
\it{E-mail address}: \bf{stephen.zito@uconn.edu}


\begin{thebibliography}{1}
\bibitem{AIR} T. Adachi, O. Iyama and I. Reiten, $\tau$-tilting theory, $\emph{Compos. Math.}~\bf{150}$ (2014), no. 3, 415--452.

\bibitem{AM} I. Assem and N. Marmaridis, Tilting modules over split-by-nilpotent extensions, $\emph{Comm. Algebra}~\bf{26}$ (1998), 1547--1555.
\bibitem{ASS} I. Assem, D. Simson and A. Skowronski, \emph{Elements of the Representation Theory of}
\emph{Associative Algebras, 1: Techniques of Representation Theory}, London Mathematical Society Student Texts 65, Cambridge University Press, 2006.

\bibitem{AZ} I. Assem and D. Zacharia, Full embeddings of almost split sequences over split-by-nilpotent extensions, $\emph{Coll. Math.}~\bf{81}$, (1) (1999), 21--31.

\bibitem{AS} M. Auslander and S. O. Smal${\o}$, Almost split sequences in subcategories, $\emph{J. Algebra}~\bf{69}$ (1981), no. 2, 426--454.

\bibitem{SS} R. Schiffler and K. Serhiyenko, Induced and coinduced modules in cluster-tilted algebras, $\emph{J. Algebra}~\bf{472}$ (2017), 226--258.

\end{thebibliography}
\end{document}